\newtheorem{theorem}{Theorem}
\theoremstyle{plain}
\newtheorem{corollary}{Corollary}
\newtheorem{example}{Example}
\newtheorem{lemma}{Lemma}
\newtheorem{problem}{Problem}
\newtheorem{proposition}{Proposition}
\numberwithin{equation}{section}
\begin{document}
\title[Asymptotic formulas for the gamma function]{Asymptotic formulas for
the gamma function constructed by bivariate means}
\author{Zhen-Hang Yang}
\address{Power Supply Service Center, ZPEPC Electric Power Research
Institute, Hangzhou, Zhejiang, China, 310007}
\email{yzhkm@163.com}
\date{July 19, 2014}
\subjclass[2010]{Primary 33B15, 26E60; Secondary 26D15, 11B83}
\keywords{Stirling's formula, gamma function, mean, inqueality, polygamma
function}
\thanks{This paper is in final form and no version of it will be submitted
for publication elsewhere.}

\begin{abstract}
Let $K,M,N$ denote three bivariate means. In the paper, the author prove the
asymptotic formulas for the gamma function have the form of%
\begin{equation*}
\Gamma \left( x+1\right) \thicksim \sqrt{2\pi }M\left( x+\theta ,x+1-\theta
\right) ^{K\left( x+\epsilon ,x+1-\epsilon \right) }e^{-N\left( x+\sigma
,x+1-\sigma \right) }
\end{equation*}%
or%
\begin{equation*}
\Gamma \left( x+1\right) \thicksim \sqrt{2\pi }M\left( x+\theta ,x+\sigma
\right) ^{K\left( x+\epsilon ,x+1-\epsilon \right) }e^{-M\left( x+\theta
,x+\sigma \right) }
\end{equation*}%
as $x\rightarrow \infty $, where $\epsilon ,\theta ,\sigma $ are fixed real
numbers. This idea can be extended to the psi and polygamma functions. As
examples, some new asymptotic formulas for the gamma function are presented.
\end{abstract}

\maketitle

\section{Introduction}

The Stirling's formula%
\begin{equation}
n!\thicksim \sqrt{2\pi n}n^{n}e^{-n}:=s_{n}  \label{S}
\end{equation}%
has important applications in statistical physics, probability theory and
and number theory. Due to its practical importance, it has attracted much
interest of many mathematicians and have motivated a large number of
research papers concerning various generalizations and improvements.

Burnside's formula \cite{Burnside-MM-46-1917}%
\begin{equation}
n!\thicksim \sqrt{2\pi }\left( \frac{n+1/2}{e}\right) ^{n+1/2}:=b_{n}
\label{B}
\end{equation}%
slight improves (\ref{S}). Gosper \cite{Gosper-PNAS-75-1978} replaced $\sqrt{%
2\pi n}$ by $\sqrt{2\pi \left( n+1/6\right) }$ in (\ref{S}) to get%
\begin{equation}
n!\thicksim \sqrt{2\pi \left( n+\tfrac{1}{6}\right) }\left( \frac{n}{e}%
\right) ^{n}:=g_{n},  \label{G}
\end{equation}%
which is better than (\ref{S}) and (\ref{B}). In the recent paper \cite%
{Batir-P-27(1)-2008}, N. Batir obtained an asymptotic formula similar to (%
\ref{G}): 
\begin{equation}
n!\thicksim \frac{n^{n+1}e^{-n}\sqrt{2\pi }}{\sqrt{n-1/6}}:=b_{n}^{\prime },
\label{Batir1}
\end{equation}%
which is stronger than (\ref{S}) and (\ref{B}). A more accurate
approximation for the factorial function%
\begin{equation}
n!\thicksim \sqrt{2\pi }\left( \frac{n^{2}+n+1/6}{e^{2}}\right)
^{n/2+1/4}:=m_{n}  \label{M}
\end{equation}%
was presented in \cite{Mortici-CMI-19(1)-2010} by Mortici.

The classical Euler's gamma function $\Gamma $ may be defined by%
\begin{equation}
\Gamma \left( x\right) =\int_{0}^{\infty }t^{x-1}e^{-t}dt  \label{Gamma}
\end{equation}%
for $x>0$, and its logarithmic derivative $\psi \left( x\right) =\Gamma
^{\prime }\left( x\right) /\Gamma \left( x\right) $ is known as the psi or
digamma function, while $\psi ^{\prime }$, $\psi ^{\prime \prime }$, ... are
called polygamma functions (see \cite{Anderson-PAMS-125(11)-1997}).

The gamma function is closely related to the Stirling's formula, since $%
\Gamma (n+1)=n!$ for all $n\in \mathbb{N}$. This inspires some authors to
also pay attention to find better approximations for the gamma function. For
example, Ramanujan's \cite[P. 339]{Ramanujan-SB-1988} double inequality for
the gamma function:%
\begin{equation}
\sqrt{\pi }\left( \tfrac{x}{e}\right) ^{x}\left( 8x^{3}+4x^{2}+x+\tfrac{1}{%
100}\right) ^{1/6}<\Gamma \left( x+1\right) <\sqrt{\pi }\left( \tfrac{x}{e}%
\right) ^{x}\left( 8x^{3}+4x^{2}+x+\tfrac{1}{30}\right) ^{1/6}  \label{R}
\end{equation}%
for $x\geq 1$. Batir \cite{Batir-AM-91-2008} showed that for $x>0$,%
\begin{eqnarray}
&&\sqrt{2}e^{4/9}\left( \frac{x}{e}\right) ^{x}\sqrt{x+\frac{1}{2}}\exp
\left( -\tfrac{1}{6\left( x+3/8\right) }\right)  \label{Batir2} \\
&<&\Gamma \left( x+1\right) <\sqrt{2\pi }\left( \frac{x}{e}\right) ^{x}\sqrt{%
x+\frac{1}{2}}\exp \left( -\tfrac{1}{6\left( x+3/8\right) }\right) .  \notag
\end{eqnarray}%
Mortici \cite{Mortici-AM-93-2009-1} proved that for $x\geq 0$, 
\begin{eqnarray}
\sqrt{2\pi e}e^{-\omega }\left( \frac{x+\omega }{e}\right) ^{x+1/2}
&<&\Gamma \left( x+1\right) \leq \alpha \sqrt{2\pi e}e^{-\omega }\left( 
\frac{x+\omega }{e}\right) ^{x+1/2},  \label{Ml} \\
\beta \sqrt{2\pi e}e^{-\varsigma }\left( \frac{x+\varsigma }{e}\right)
^{x+1/2} &<&\Gamma \left( x+1\right) \leq \sqrt{2\pi e}e^{-\varsigma }\left( 
\frac{x+\varsigma }{e}\right) ^{x+1/2}  \label{Mr}
\end{eqnarray}%
where $\omega =\left( 3-\sqrt{3}\right) /6$, $\alpha =1.072042464...$ and $%
\varsigma =\left( 3+\sqrt{3}\right) /6$, $\beta =0.988503589...$.

More results involving the asymptotic formulas for the factorial or gamma
functions can consult \cite{Shi-JCAM-195-2006}, \cite{Guo-JIPAM-9(1)-2008}, 
\cite{Mortici-MMN-11(1)-2010}, \cite{Mortici-CMA-61-2011}, \cite%
{Zhao-PMD-80(3-4)-2012}, \cite{Mortici-MCM-57-2013}, \cite{Qi-JCAM-268-2014}%
, \cite{Qi-JCAM-268-2014}, \cite{Lu-RJ-35(1)-2014} and the references cited
therein).

Mortici \cite{Mortici-BTUB-iii-3(52)-2010} presented an idea that by
replacing an under-approximation and an upper-approximation of the factorial
function by one of their geometric mean to improve certain approximation
formula of the factorial. In fact, by observing and analyzing these
asymptotic formulas for factorial or gamma function, we find out that they
have the common form of%
\begin{equation}
\ln \Gamma \left( x+1\right) \thicksim \frac{1}{2}\ln 2\pi +P_{1}\left(
x\right) \ln P_{2}\left( x\right) -P_{3}\left( x\right) +P_{4}\left(
x\right) ,  \label{g-form}
\end{equation}%
where $P_{1}\left( x\right) ,P_{2}\left( x\right) $ and $P_{3}\left(
x\right) $ are all means of $x$ and $\left( x+1\right) $, while $P_{4}\left(
x\right) $ satisfies $P_{4}\left( \infty \right) =0$. For example, (\ref{S}%
)--(\ref{M}) can be written as%
\begin{eqnarray*}
&&\ln n!\thicksim \frac{1}{2}\ln 2\pi +\left( n+\frac{1}{2}\right) \ln n-n,
\\
&&\ln n!\thicksim \frac{1}{2}\ln 2\pi +\left( n+\frac{1}{2}\right) \ln
\left( n+\frac{1}{2}\right) -\left( n+\frac{1}{2}\right) , \\
&&\ln n!\thicksim \frac{1}{2}\ln 2\pi +\left( n+\frac{1}{2}\right) \ln n-n+%
\frac{1}{2}\ln \left( 1+\tfrac{1}{6n}\right) , \\
&&\ln n!\thicksim \frac{1}{2}\ln 2\pi +\left( n+\frac{1}{2}\right) \ln n-n-%
\frac{1}{2}\ln \left( 1-\tfrac{1}{6n}\right) , \\
&&\ln n!\thicksim \frac{1}{2}\ln 2\pi +\left( n+\frac{1}{2}\right) \ln \sqrt{%
\frac{n^{2}+4n\left( n+1\right) +\left( n+1\right) ^{2}}{6}}-\left( n+\frac{1%
}{2}\right) .
\end{eqnarray*}%
Inequalities (\ref{R})--(\ref{Mr}) imply that%
\begin{eqnarray*}
&&\ln \Gamma \left( x+1\right) \thicksim \frac{1}{2}\ln 2\pi +\left( x+\frac{%
1}{2}\right) \ln x-x+\frac{1}{6}\ln \left( 1+\frac{1}{2x}+\frac{1}{8x^{2}}+%
\frac{1}{240x^{3}}\right) , \\
&&\ln \Gamma \left( x+1\right) \thicksim \frac{1}{2}\ln 2\pi +\left( x+\frac{%
1}{2}\right) \ln x-x+\frac{1}{2}\ln \left( 1+\frac{1}{2x}\right) -\tfrac{1}{%
6\left( x+3/8\right) }, \\
&&\ln \Gamma \left( x+1\right) \thicksim \frac{1}{2}\ln 2\pi +\left( x+\frac{%
1}{2}\right) \ln \left( \left( 1-a\right) x+a\left( x+1\right) \right)
-\left( \left( 1-a\right) x+a\left( x+1\right) \right) ,
\end{eqnarray*}%
where $a=\omega =(3-\sqrt{3})/6$, $\varsigma =(3+\sqrt{3})/6$.

The aim of this paper is to prove the validity of the form (\ref{g-form})
which offers such a new way to construct asymptotic formulas for Euler gamma
function in terms of bivariate means. Our main results are included in
Section 2. Some new examples are presented in the last section.

\section{Main results}

Before stating and proving our main results, we recall some knowledge on
means. Let $I$ be an interval on $\mathbb{R}$. A bivariate real valued
function $M:I^{2}\rightarrow \mathbb{R}$ is said to be a bivariate mean if 
\begin{equation*}
\min \left( a,b\right) \leq M\left( a,b\right) \leq \max \left( a,b\right)
\end{equation*}%
for all $a,b\in I$. Clearly, each bivariate mean $M$ is reflexive, that is, 
\begin{equation*}
M\left( a,a\right) =a
\end{equation*}%
for any $a\in I$. $M$ is symmetric if 
\begin{equation*}
M\left( a,b\right) =M\left( b,a\right)
\end{equation*}%
for all $a,b\in I$, and $M$ is said to be homogeneous (of degree one) if 
\begin{equation}
M\left( ta,tb\right) =tM\left( a,b\right)  \label{M-h}
\end{equation}%
for any $a,b\in I$ and $t>0$.

The lemma is crucial to prove our results.

\begin{lemma}[{\protect\cite[Thoerem 1, 2, 3]{Toader.MIA.5.2002}}]
\label{Lemma M}If $M:I^{2}\rightarrow \mathbb{R}$ is a differentiable mean,
then for $c\in I$,%
\begin{equation*}
M_{a}^{\prime }\left( c,c\right) ,M_{b}^{\prime }\left( c,c\right) \in
\left( 0,1\right) \text{ \ and \ }M_{a}^{\prime }\left( c,c\right)
+M_{b}^{\prime }\left( c,c\right) =1\text{.}
\end{equation*}%
In particular, if $M$ is symmetric, then%
\begin{equation*}
M_{a}^{\prime }\left( c,c\right) =M_{b}^{\prime }\left( c,c\right) =1/2.
\end{equation*}
\end{lemma}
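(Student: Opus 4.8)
The plan is to derive the two stated identities—$M_a'(c,c),M_b'(c,c)\in(0,1)$ and $M_a'(c,c)+M_b'(c,c)=1$—directly from the defining double inequality $\min(a,b)\le M(a,b)\le\max(a,b)$ together with differentiability at the diagonal point $(c,c)\in I^2$. The symmetric case then follows instantly by swapping the roles of the two arguments. Throughout I would fix $c$ in the interior of $I$ (the boundary cases needing one-sided derivatives, which I would note but not belabor).

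\medskip

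First I would establish the sum identity. Since $M$ is reflexive, $M(t,t)=t$ for all $t\in I$; differentiating this identity in $t$ via the chain rule gives $M_a'(c,c)+M_b'(c,c)=1$. This is the easy half and needs only reflexivity plus differentiability. Next, for the membership in $(0,1)$, I would exploit the squeeze. Consider the partial derivative $M_a'(c,c)=\lim_{h\to 0}\frac{M(c+h,c)-M(c,c)}{h}=\lim_{h\to 0}\frac{M(c+h,c)-c}{h}$. For $h>0$ small, $\min(c+h,c)=c$ and $\max(c+h,c)=c+h$, so $c\le M(c+h,c)\le c+h$, whence $0\le \frac{M(c+h,c)-c}{h}\le 1$; letting $h\downarrow 0$ forces $M_a'(c,c)\in[0,1]$. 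To exclude the endpoints I would use the sum identity in tandem: if $M_a'(c,c)=0$ then $M_b'(c,c)=1$, and I would show this contradicts the inequality by probing along a suitable direction (e.g. comparing $M(c,c+h)$ against $\max(c,c+h)=c+h$, which gives $M_b'(c,c)\le 1$ as a non-strict bound, so a finer argument is needed). The clean way is: the one-sided derivative from $h<0$ gives $\min(c+h,c)=c+h$, $\max=c$, so $c+h\le M(c+h,c)\le c$, i.e. $\frac{M(c+h,c)-c}{h}\in[0,1]$ again, but now a strict interior conclusion comes from observing that $M_a'$ and $M_b'$ cannot both be extreme simultaneously while summing to $1$—one would have to be $0$ and the other $1$, and the value-$1$ case is ruled out because $M(c+h,c)\le c$ for $h<0$ combined with $M(c+h,c)\ge c+h$ cannot produce derivative $1$ unless $M(c+h,c)=c$ identically near $c$ from the left, which then forces the derivative to be $0$, not $1$—a contradiction. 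I would tighten this into a clean two-line argument in the writeup.

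\medskip

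\textbf{Main obstacle.} The genuinely delicate point is ruling out the endpoint values $0$ and $1$ rather than merely the closed interval $[0,1]$; the squeeze argument gives the closed interval for free, but strictness requires combining the two-sided behavior with the constraint $M_a'+M_b'=1$ and a short contradiction argument. A secondary technical wrinkle is the treatment of $c$ at an endpoint of $I$, where only one-sided difference quotients are available; there the relevant one-sided squeeze still pins the derivative in $[0,1]$, and strictness is argued as above. Since the statement is quoted from Toader's paper \cite{Toader.MIA.5.2002}, I would present the proof compactly and cite that reference for the full details, as the excerpt already does.

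\medskip

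Finally, for the symmetric case: if $M(a,b)=M(b,a)$ for all $a,b$, then differentiating this identity and evaluating at $(c,c)$ gives $M_a'(c,c)=M_b'(c,c)$; combined with $M_a'(c,c)+M_b'(c,c)=1$ this yields $M_a'(c,c)=M_b'(c,c)=1/2$, completing the proof.
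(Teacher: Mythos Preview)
The paper does not actually prove this lemma; it is quoted from Toader \cite{Toader.MIA.5.2002} and used as a black box. So there is no ``paper's own proof'' to compare against, and your task reduces to giving a self-contained argument.

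Your derivation of the sum identity $M_a'(c,c)+M_b'(c,c)=1$ from reflexivity is correct, and so is the squeeze argument giving $M_a'(c,c)\in[0,1]$. The symmetric case is also handled correctly.

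The genuine gap is exactly where you flagged it: the exclusion of the endpoints $0$ and $1$. Your contradiction argument does not work, and in fact \emph{cannot} work under the paper's stated definition of a mean (non-strict inequalities $\min(a,b)\le M(a,b)\le\max(a,b)$). The trivial mean $M(a,b)=a$ is differentiable, satisfies this definition, and has $M_a'(c,c)=1$, $M_b'(c,c)=0$. Your sentence ``the value-$1$ case is ruled out because $M(c+h,c)\le c$ for $h<0$ \ldots\ cannot produce derivative $1$ unless $M(c+h,c)=c$ identically'' is simply false: for $M(a,b)=a$ and $h<0$ one has $M(c+h,c)=c+h$, so the difference quotient is identically $1$.

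What is missing is the hypothesis of a \emph{strict} mean, i.e.\ $\min(a,b)<M(a,b)<\max(a,b)$ whenever $a\ne b$; this is presumably the setting in Toader's paper, and under it your squeeze argument can be upgraded (though it still requires care---strict inequalities in the squeeze do not automatically survive the limit). For the purposes of the present paper this issue is moot: only the symmetric case $M_a'(c,c)=M_b'(c,c)=1/2$ is ever invoked (in Theorems~\ref{MT-p2><p3} and~\ref{MT-p2=p3=x+1/2}), and that follows cleanly from reflexivity plus symmetry, with no need for the open-interval claim.
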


Now we are in a position to state and prove main results.

\begin{theorem}
\label{MT-p2><p3}Let $M:\left( 0,\infty \right) \times \left( 0,\infty
\right) \rightarrow \left( 0,\infty \right) $ and $N:\left( -\infty ,\infty
\right) \times \left( -\infty ,\infty \right) \rightarrow \left( -\infty
,\infty \right) $ be two symmetric, homogeneous and differentiable means and
let $r$ be defined on $\left( 0,\infty \right) $ satisfying $%
\lim_{x\rightarrow \infty }r\left( x\right) =0$. Then for fixed real numbers 
$\theta ,\theta ^{\ast },\sigma ,\sigma ^{\ast }$ with $\theta +\theta
^{\ast }=\sigma +\sigma ^{\ast }=1$ such that $x>-\min \left( 1,\theta
,\theta ^{\ast }\right) $, we have%
\begin{equation*}
\Gamma \left( x+1\right) \thicksim \sqrt{2\pi }M\left( x+\theta ,x+\theta
^{\ast }\right) ^{x+1/2}e^{-N\left( x+\sigma ,x+\sigma ^{\ast }\right)
}e^{r\left( x\right) }\text{, as }x\rightarrow \infty .
\end{equation*}
\end{theorem}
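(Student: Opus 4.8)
The plan is to reduce the asymptotic equivalence to the classical Stirling expansion of $\ln\Gamma(x+1)$ and then show that replacing the "raw" ingredients $\ln x$ and $x$ by the symmetric means $\ln M(x+\theta,x+\theta^{\ast})$ and $N(x+\sigma,x+\sigma^{\ast})$ costs only an error tending to $0$, which can then be absorbed into $r(x)$. Concretely, taking logarithms, the claim is equivalent to
\begin{equation*}
\ln\Gamma(x+1)-\tfrac12\ln 2\pi-\Bigl(x+\tfrac12\Bigr)\ln M\left(x+\theta,x+\theta^{\ast}\right)+N\left(x+\sigma,x+\sigma^{\ast}\right)\longrightarrow 0
\end{equation*}
as $x\to\infty$, since the leftover is exactly what we are free to call $r(x)$. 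So the first step is to recall Stirling in the form $\ln\Gamma(x+1)=\tfrac12\ln 2\pi+(x+\tfrac12)\ln x-x+o(1)$, so that it suffices to prove the two separate limits
\begin{equation*}
\Bigl(x+\tfrac12\Bigr)\bigl[\ln M\left(x+\theta,x+\theta^{\ast}\right)-\ln x\bigr]\to 0,\qquad N\left(x+\sigma,x+\sigma^{\ast}\right)-x\to 0.
\end{equation*}

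The second step handles $N$. By homogeneity, $N(x+\sigma,x+\sigma^{\ast})=x\,N\bigl(1+\sigma/x,\,1+\sigma^{\ast}/x\bigr)$. Writing $f(t)=N(1+\sigma t,1+\sigma^{\ast}t)$ for $t=1/x\to 0^{+}$, reflexivity gives $f(0)=N(1,1)=1$, and differentiability plus the chain rule give $f'(0)=\sigma N_a'(1,1)+\sigma^{\ast}N_b'(1,1)$. Here the symmetric case of Lemma~\ref{Lemma M} applies: $N_a'(1,1)=N_b'(1,1)=1/2$, so $f'(0)=(\sigma+\sigma^{\ast})/2=1/2$ by the hypothesis $\sigma+\sigma^{\ast}=1$. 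Therefore $N(x+\sigma,x+\sigma^{\ast})=x\bigl(1+\tfrac{1}{2x}+o(1/x)\bigr)=x+\tfrac12+o(1)$, and in particular $N(x+\sigma,x+\sigma^{\ast})-x\to\tfrac12$.

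Wait — that gives a limit of $\tfrac12$, not $0$; but that is harmless, because the constant $\tfrac12$ (together with the $-x$ from Stirling and the $+\tfrac12$ analogously extracted from the $M$-term below) is exactly the kind of bounded discrepancy that gets swept into $r(x)$ only if it vanishes. So I must be more careful: the correct bookkeeping is that $(x+\tfrac12)\ln M(x+\theta,x+\theta^{\ast})-N(x+\sigma,x+\sigma^{\ast})$ should differ from $(x+\tfrac12)\ln x-x$ by $o(1)$. The third step treats $M$ in the same manner: $\ln M(x+\theta,x+\theta^{\ast})=\ln x+\ln g(1/x)$ with $g(t)=M(1+\theta t,1+\theta^{\ast}t)$, $g(0)=1$, $g'(0)=\tfrac12(\theta+\theta^{\ast})=\tfrac12$, so $\ln g(t)=\tfrac12 t+O(t^2)=\tfrac{1}{2x}+O(1/x^2)$, whence $(x+\tfrac12)\bigl[\ln M(x+\theta,x+\theta^{\ast})-\ln x\bigr]=(x+\tfrac12)\bigl(\tfrac{1}{2x}+O(1/x^2)\bigr)\to\tfrac12$. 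Combining, $(x+\tfrac12)\ln M(x+\theta,x+\theta^{\ast})-N(x+\sigma,x+\sigma^{\ast})=\bigl[(x+\tfrac12)\ln x+\tfrac12+o(1)\bigr]-\bigl[x+\tfrac12+o(1)\bigr]=(x+\tfrac12)\ln x-x+o(1)$, which matches Stirling up to $o(1)$. Exponentiating and setting $r(x)$ equal to this $o(1)$ discrepancy proves the theorem.

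The main obstacle is the error-term control in the third step: to conclude $(x+\tfrac12)\bigl[\ln g(1/x)-\tfrac{1}{2x}\bigr]\to 0$ one needs $\ln g(t)-\tfrac12 t=O(t^2)$, i.e. a genuine second-order estimate, whereas differentiability of $M$ only gives $g(t)=1+\tfrac12 t+o(t)$ a priori. This is resolved by noting that $M(a,b)\in[\min(a,b),\max(a,b)]$ forces $g(t)$ to lie between $1+t\min(\theta,\theta^{\ast})$ and $1+t\max(\theta,\theta^{\ast})$, so $g(t)=1+O(t)$ with an explicit constant, and then a short Taylor-with-remainder argument — or simply bounding $|g(t)-1-\tfrac12 t|$ using the mean-value theorem applied to $f'$ near $0$, using that $M$ is differentiable so $g'$ is continuous — upgrades $o(t)$ to the required $O(t^2)$ on a neighborhood of $0$; the hypothesis $x>-\min(1,\theta,\theta^{\ast})$ guarantees both arguments stay positive so that $M$ and the logarithm are defined throughout. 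A clean alternative, avoiding any appeal to $C^2$-smoothness, is to write the discrepancy as $(x+\tfrac12)\ln g(1/x)-\tfrac12$ and show directly that $x\ln g(1/x)\to\tfrac12$ and $\tfrac12\ln g(1/x)\to 0$, both of which follow from $g(t)\to1$ and $(g(t)-1)/t\to\tfrac12$ alone — this is the route I would actually take, since it uses nothing beyond Lemma~\ref{Lemma M} and first-order differentiability.
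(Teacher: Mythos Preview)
Your proposal is correct and follows essentially the same route as the paper: reduce the claim via logarithms to Stirling, exploit homogeneity to write each mean as $x$ times a function of $t=1/x$, and invoke Lemma~\ref{Lemma M} to compute the first derivative at $t=0$. The only cosmetic difference is that the paper centers Stirling at $x+\tfrac12$ (Burnside's form) rather than at $x$, so its two auxiliary limits $D_1$ and $D_2$ are each $0$ instead of each $\tfrac12$ with a cancellation---this sidesteps both your initial bookkeeping hiccup and your detour through the (unneeded and, as stated, unjustified) $O(t^{2})$ argument, landing directly on what you call the ``clean alternative.''
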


\begin{proof}
Since $\lim_{x\rightarrow \infty }r\left( x\right) =0$, the desired result
is equivalent to%
\begin{equation*}
\lim_{x\rightarrow \infty }\left( \ln \Gamma \left( x+1\right) -\ln \sqrt{%
2\pi }-\left( x+\frac{1}{2}\right) \ln M\left( x+\theta ,x+\theta ^{\ast
}\right) +N\left( x+\sigma ,x+\sigma ^{\ast }\right) \right) =0.
\end{equation*}%
Due to $\lim_{x\rightarrow \infty }r\left( x\right) =0$ and the known
relation%
\begin{equation*}
\lim_{x\rightarrow \infty }\left( \ln \Gamma \left( x+1\right) -\left( x+%
\frac{1}{2}\right) \ln \left( x+\frac{1}{2}\right) +\left( x+\frac{1}{2}%
\right) \right) =\frac{1}{2}\ln 2\pi ,
\end{equation*}%
it suffices to prove that 
\begin{eqnarray*}
D_{1} &:&=\lim_{x\rightarrow \infty }\left( x+\frac{1}{2}\right) \ln \frac{%
M\left( x+\theta ,x+\theta ^{\ast }\right) }{x+1/2}=0, \\
D_{2} &:&=\lim_{x\rightarrow \infty }\left( N\left( x+\sigma ,x+\sigma
^{\ast }\right) -\left( x+\frac{1}{2}\right) \right) =0.
\end{eqnarray*}

Letting $x=1/t$, using the homogeneity of $M$, that is, (\ref{M-h}), and
utilizing L'Hospital rule give%
\begin{eqnarray*}
D_{1} &=&\lim_{t\rightarrow 0^{+}}\frac{1+t/2}{t}\ln \frac{M\left( 1+\theta
t,1+\theta ^{\ast }t\right) }{1+t/2} \\
&=&\lim_{t\rightarrow 0^{+}}\frac{\ln M\left( 1+\theta t,1+\theta ^{\ast
}t\right) -\ln \left( 1+t/2\right) }{t} \\
&=&\lim_{t\rightarrow 0^{+}}\left( \frac{\theta M_{x}\left( 1+\theta
t,1+\theta ^{\ast }t\right) +\theta ^{\ast }M_{y}\left( 1+\theta t,1+\theta
^{\ast }t\right) }{M\left( 1+\theta t,1+\theta ^{\ast }t\right) }-\frac{1}{%
2+t}\right) \\
&=&\frac{\theta M_{x}\left( 1,1\right) +\theta ^{\ast }M_{y}\left(
1,1\right) }{M\left( 1,1\right) }-\frac{1}{2}=0,
\end{eqnarray*}%
where the last equality holds due to Lemma \ref{Lemma M}.

Similarly, we have 
\begin{eqnarray*}
D_{2} &=&\lim_{x\rightarrow \infty }\left( N\left( x+\sigma ,x+\sigma ^{\ast
}\right) -\left( x+\frac{1}{2}\right) \right) \\
&&\overset{1/x=t}{=\!=\!=}\lim_{t\rightarrow 0^{+}}\frac{N\left( 1+\sigma
t,1+\sigma ^{\ast }t\right) -\left( 1+t/2\right) }{t} \\
&=&\lim_{t\rightarrow 0^{+}}\left( \sigma N_{x}\left( 1+\sigma t,1+\sigma
^{\ast }t\right) +\sigma ^{\ast }N_{y}\left( 1+\sigma t,1+\sigma ^{\ast
}t\right) -\frac{1}{2}\right) \\
&=&\frac{\sigma +\sigma ^{\ast }}{2}-\frac{1}{2}=0,
\end{eqnarray*}%
which proves the desired result.
\end{proof}

\begin{theorem}
\label{MT-p2=p3}Let $M:\left( 0,\infty \right) \times \left( 0,\infty
\right) \rightarrow \left( 0,\infty \right) $ be a mean and let $r$ be
defined on $\left( 0,\infty \right) $ satisfying $\lim_{x\rightarrow \infty
}r\left( x\right) =0$. Then for fixed real numbers $\theta ,\sigma $ such
that $x>-\min \left( 1,\theta ,\sigma \right) $, we have%
\begin{equation*}
\Gamma \left( x+1\right) \thicksim \sqrt{2\pi }M\left( x+\theta ,x+\sigma
\right) ^{x+1/2}e^{-M\left( x+\theta ,x+\sigma \right) }e^{r\left( x\right) }%
\text{, as }x\rightarrow \infty .
\end{equation*}
\end{theorem}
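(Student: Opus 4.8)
The plan is to reduce everything to a single limit and to exploit the fact that the \emph{same} quantity $M(x+\theta,x+\sigma)$ occurs both as the base of the power and inside the exponential; this exact coincidence, rather than any regularity of $M$, is what makes the statement true for an arbitrary mean $M$ and arbitrary fixed $\theta,\sigma$. Write $m(x):=M\left(x+\theta,x+\sigma\right)$. Since $\lim_{x\to\infty}r(x)=0$, taking logarithms shows the assertion is equivalent to
\begin{equation*}
\lim_{x\to\infty}\left(\ln\Gamma(x+1)-\tfrac12\ln 2\pi-\left(x+\tfrac12\right)\ln m(x)+m(x)\right)=0.
\end{equation*}
First I would invoke the same classical relation used in the proof of Theorem \ref{MT-p2><p3}, namely $\lim_{x\to\infty}\left(\ln\Gamma(x+1)-\left(x+\tfrac12\right)\ln\left(x+\tfrac12\right)+\left(x+\tfrac12\right)\right)=\tfrac12\ln 2\pi$. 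Subtracting, the problem collapses to showing that
\begin{equation*}
f(x):=\left(x+\tfrac12\right)\ln\frac{x+1/2}{m(x)}+m(x)-\left(x+\tfrac12\right)\longrightarrow 0\quad\text{as }x\to\infty.
\end{equation*}

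The decisive structural observation is that, because $M$ is a mean,
\begin{equation*}
x+\min(\theta,\sigma)\le m(x)\le x+\max(\theta,\sigma),
\end{equation*}
so that the defect $\delta(x):=\left(x+\tfrac12\right)-m(x)$ is \emph{bounded} uniformly in $x$, lying in $\left[\tfrac12-\max(\theta,\sigma),\,\tfrac12-\min(\theta,\sigma)\right]$; in particular $m(x)\to\infty$ and $\delta(x)/m(x)\to 0$. I would then rewrite $f$ in divergence form by introducing $h(t):=t\ln t-t+1$, which gives
\begin{equation*}
f(x)=m(x)\,h\!\left(\frac{x+1/2}{m(x)}\right).
\end{equation*}
Since $h(1)=h'(1)=0$ and $h''(1)=1$, one has $h(t)=\tfrac12(t-1)^2+o\!\left((t-1)^2\right)$ as $t\to 1$. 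Taking $t=(x+1/2)/m(x)=1+\delta(x)/m(x)\to 1$ yields
\begin{equation*}
f(x)=m(x)\left(\frac{\delta(x)^2}{2\,m(x)^2}+o\!\left(\frac{\delta(x)^2}{m(x)^2}\right)\right)=\frac{\delta(x)^2}{2\,m(x)}+o\!\left(\frac{1}{m(x)}\right)\longrightarrow 0,
\end{equation*}
because $\delta$ is bounded while $m(x)\to\infty$. This proves the theorem.

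The main obstacle --- and the reason the hypotheses here are so much weaker than in Theorem \ref{MT-p2><p3} (no symmetry, homogeneity, or differentiability of $M$, and unconstrained $\theta,\sigma$) --- is conceptual rather than computational. In Theorem \ref{MT-p2><p3} the two contributions $D_1$ and $D_2$ had to vanish \emph{separately}, which is exactly what forced the derivative relations of Lemma \ref{Lemma M}. Here, by contrast, $\delta(x)$ does \emph{not} tend to zero in general; it is merely bounded. What rescues the argument is an exact cancellation: the $+\delta(x)$ produced by the first-order expansion of $\left(x+\tfrac12\right)\ln\frac{x+1/2}{m(x)}$ is annihilated by the $-\delta(x)$ coming from $m(x)-\left(x+\tfrac12\right)$, leaving only the genuinely infinitesimal remainder of order $\delta(x)^2/m(x)$. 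The care required is therefore to treat $\delta(x)$ as a bounded (not infinitesimal) quantity and to make the second-order estimate on $h$ precise, rather than to split $f$ into two independently convergent pieces as one might be tempted to do by analogy with the previous proof.
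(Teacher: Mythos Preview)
Your proof is correct and follows essentially the same route as the paper: both reduce via the Burnside relation to showing that $(x+\tfrac12)\ln\frac{M}{x+1/2}-\bigl(M-(x+\tfrac12)\bigr)\to 0$, both use the mean property to see that $M(x+\theta,x+\sigma)-(x+\tfrac12)$ stays bounded, and both then dispatch the remaining limit. The only cosmetic difference is in this last step: the paper factors the remainder as $\bigl(M-(x+\tfrac12)\bigr)\bigl(1/L(y,1)-1\bigr)$ with $y=M/(x+\tfrac12)\to 1$ and $L$ the logarithmic mean, whereas you write it as $m(x)\,h\!\bigl((x+\tfrac12)/m(x)\bigr)$ with $h(t)=t\ln t-t+1$ and use $h(t)=\tfrac12(t-1)^2+o((t-1)^2)$ --- two equivalent packagings of the same second-order cancellation, yours being slightly more quantitative.
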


\begin{proof}
Since $\lim_{x\rightarrow \infty }r\left( x\right) =0$, the desired result
is equivalent to%
\begin{equation*}
\lim_{x\rightarrow \infty }\left( \ln \Gamma \left( x+1\right) -\ln \sqrt{%
2\pi }-\left( x+\frac{1}{2}\right) \ln M\left( x+\theta ,x+\sigma \right)
+M\left( x+\theta ,x+\sigma \right) \right) =0.
\end{equation*}%
Similarly, it suffices to prove that 
\begin{eqnarray*}
D_{3} &:&=\lim_{x\rightarrow \infty }\left( \left( x+\frac{1}{2}\right) \ln 
\frac{M\left( x+\theta ,x+\sigma \right) }{x+1/2}-\left( M\left( x+\theta
,x+\sigma \right) -\left( x+\frac{1}{2}\right) \right) \right) \\
&=&\lim_{x\rightarrow \infty }\left( \left( M\left( x+\theta ,x+\sigma
\right) -\left( x+\frac{1}{2}\right) \right) \times \left( \frac{1}{L\left(
y,1\right) }-1\right) \right) =0,
\end{eqnarray*}%
where $L\left( a,b\right) $ is the logarithmic mean of positive $a$ and $b$, 
$y=M\left( x+\theta ,x+\sigma \right) /\left( x+1/2\right) $.

Now we first show that%
\begin{equation*}
D_{4}:=M\left( x+\theta ,x+\sigma \right) -\left( x+\frac{1}{2}\right)
\end{equation*}%
is bounded. In fact, by the property of mean we see that%
\begin{equation*}
x+\min \left( \theta ,\sigma \right) -\left( x+\frac{1}{2}\right)
<D_{4}<x+\max \left( \theta ,\sigma \right) -\left( x+\frac{1}{2}\right)
\end{equation*}%
that is,%
\begin{equation*}
\min \left( \theta ,\sigma \right) -\frac{1}{2}<D_{4}<\max \left( \theta
,\sigma \right) -\frac{1}{2}.
\end{equation*}%
It remains to prove that%
\begin{equation*}
\lim_{x\rightarrow \infty }D_{5}:=\lim_{x\rightarrow \infty }\left( \frac{1}{%
L\left( y,1\right) }-1\right) =0.
\end{equation*}%
Since%
\begin{equation*}
\frac{x+\min \left( \theta ,\sigma \right) }{x+1/2}<y=\frac{M\left( x+\theta
,x+\sigma \right) }{x+1/2}<\frac{x+\max \left( \theta ,\sigma \right) }{x+1/2%
},
\end{equation*}%
so we have $\lim_{x\rightarrow \infty }y=1$. This together with%
\begin{equation*}
\min \left( y,1\right) \leq L\left( y,1\right) \leq \max \left( y,1\right)
\end{equation*}%
yields $\lim_{x\rightarrow \infty }L\left( y,1\right) =1$, and therefore, $%
\lim_{x\rightarrow \infty }D_{5}=0$.

This completes the proof.
\end{proof}

\begin{theorem}
\label{MT-p2=p3=x+1/2}Let $K:\left( -\infty ,\infty \right) \times \left(
-\infty ,\infty \right) \rightarrow \left( -\infty ,\infty \right) $ be a
symmetric, homogeneous and twice differentiable mean and let $r$ be defined
on $\left( 0,\infty \right) $ satisfying $\lim_{x\rightarrow \infty }r\left(
x\right) =0$. Then for fixed real numbers $\epsilon ,\epsilon ^{\ast }$ with 
$\epsilon +\epsilon ^{\ast }=1$, we have%
\begin{equation*}
\Gamma \left( x+1\right) \thicksim \sqrt{2\pi }\left( x+\frac{1}{2}\right)
^{K(x+\epsilon ,x+\epsilon ^{\ast })}e^{-\left( x+1/2\right) }e^{r\left(
x\right) }\text{, as }x\rightarrow \infty
\end{equation*}
\end{theorem}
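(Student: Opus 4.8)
The plan is to follow the template of the proofs of Theorems \ref{MT-p2><p3} and \ref{MT-p2=p3}. Since $\lim_{x\to\infty}r(x)=0$, the claimed equivalence is the same as
\[
\lim_{x\to\infty}\left(\ln\Gamma(x+1)-\tfrac12\ln 2\pi-K(x+\epsilon,x+\epsilon^{\ast})\ln\!\left(x+\tfrac12\right)+\left(x+\tfrac12\right)\right)=0,
\]
and, subtracting off the known relation $\lim_{x\to\infty}\bigl(\ln\Gamma(x+1)-(x+\tfrac12)\ln(x+\tfrac12)+(x+\tfrac12)\bigr)=\tfrac12\ln 2\pi$ already used above, this reduces to proving
\[
E:=\lim_{x\to\infty}\left(\left(x+\tfrac12\right)-K(x+\epsilon,x+\epsilon^{\ast})\right)\ln\!\left(x+\tfrac12\right)=0 .
\]

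The key point is that the mean property alone only says that $K(x+\epsilon,x+\epsilon^{\ast})-(x+\tfrac12)$ lies between $\min(\epsilon,\epsilon^{\ast})-\tfrac12$ and $\max(\epsilon,\epsilon^{\ast})-\tfrac12$ (here $\epsilon+\epsilon^{\ast}=1$ makes $x+\tfrac12$ the midpoint), i.e. it is merely bounded; that is not enough, since it gets multiplied by $\ln(x+\tfrac12)\to\infty$. So I would set $t=1/x$ and use the homogeneity of $K$ to write
\[
\left(x+\tfrac12\right)-K(x+\epsilon,x+\epsilon^{\ast})=\frac1t\left(1+\frac t2-g(t)\right),\qquad g(t):=K(1+\epsilon t,1+\epsilon^{\ast}t).
\]
Reflexivity gives $g(0)=K(1,1)=1$; since $K$ is symmetric and differentiable, Lemma \ref{Lemma M} gives $K_x(1,1)=K_y(1,1)=\tfrac12$, hence $g'(0)=\epsilon K_x(1,1)+\epsilon^{\ast}K_y(1,1)=\tfrac12(\epsilon+\epsilon^{\ast})=\tfrac12$. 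Now, because $K$ is \emph{twice} differentiable, $g$ is twice differentiable near $0$, so Taylor's formula yields $g(t)=1+\tfrac t2+O(t^{2})$ as $t\to 0^{+}$; therefore $1+\tfrac t2-g(t)=O(t^{2})$ and $(x+\tfrac12)-K(x+\epsilon,x+\epsilon^{\ast})=O(t)=O(1/x)$. Consequently $E=\lim_{x\to\infty}O(1/x)\cdot\ln(x+\tfrac12)=\lim_{x\to\infty}O\!\left(\tfrac{\ln x}{x}\right)=0$, which completes the proof.

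The hard part — and the reason this argument needs a hypothesis absent from Theorems \ref{MT-p2><p3} and \ref{MT-p2=p3} — is precisely that the first-order terms cancel exactly: since $g'(0)=\tfrac12$, a first-order expansion only produces $(x+\tfrac12)-K=o(1)$, which does not survive multiplication by $\ln(x+\tfrac12)$. Passing to the second-order remainder upgrades this to $O(1/x)$, which then dominates the logarithm. The one technical point to pin down is that the $O(t^{2})$ remainder is legitimate; this is clear if $g''$ is locally bounded near $0$ (which follows from twice differentiability of $K$ on a neighborhood of $(1,1)$), or one may instead invoke Taylor's theorem with Peano remainder, $g(t)=1+\tfrac t2+\tfrac{g''(0)}{2}t^{2}+o(t^{2})$, which requires only that $g$ be twice differentiable at the single point $t=0$.
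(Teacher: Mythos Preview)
Your proof is correct and follows essentially the same route as the paper: both reduce to showing $\bigl(K(x+\epsilon,x+\epsilon^{\ast})-(x+\tfrac12)\bigr)\ln(x+\tfrac12)\to 0$, substitute $t=1/x$, use homogeneity, and exploit the second-order behavior of $g(t)=K(1+\epsilon t,1+\epsilon^{\ast}t)$ at $t=0$. The only cosmetic difference is that the paper applies L'H\^opital's rule twice to show $\lim_{t\to 0^{+}}\frac{g(t)-(1+t/2)}{t^{2}}$ is finite (equal to $-\tfrac{(2\epsilon-1)^{2}}{2}K_{xy}(1,1)$) and then multiplies by $\lim_{t\to 0^{+}}\bigl(t\ln(1+t/2)-t\ln t\bigr)=0$, whereas you package the same computation as a Taylor expansion with $O(t^{2})$ remainder.
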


\begin{proof}
Due to $\lim_{x\rightarrow \infty }r\left( x\right) =0$, the result in
question is equivalent to%
\begin{equation*}
\lim_{x\rightarrow \infty }\left( \ln \Gamma \left( x+1\right) -\ln \sqrt{%
2\pi }-K\left( x+\epsilon ,x+\epsilon ^{\ast }\right) \ln \left( x+\frac{1}{2%
}\right) +\left( x+\frac{1}{2}\right) \right) =0.
\end{equation*}%
Clearly, we only need to prove that 
\begin{equation*}
D_{6}:=\lim_{x\rightarrow \infty }\left( K\left( x+\epsilon ,x+\epsilon
^{\ast }\right) -\left( x+\frac{1}{2}\right) \right) \ln \left( x+\frac{1}{2}%
\right) =0.
\end{equation*}%
By the homogeneity of $K$, we get 
\begin{eqnarray*}
&&D_{6}\!\overset{1/x=t}{=\!=\!=}\lim_{t\rightarrow 0^{+}}\frac{K\left(
1+\epsilon t,1+\epsilon ^{\ast }t\right) -\left( 1+t/2\right) }{t}\left( \ln
\left( 1+\frac{t}{2}\right) -\ln t\right) \\
&=&\lim_{t\rightarrow 0^{+}}\frac{K\left( 1+\epsilon t,1+\epsilon ^{\ast
}t\right) -\left( 1+t/2\right) }{t^{2}}\lim_{t\rightarrow 0^{+}}\left( t\ln
\left( 1+\frac{t}{2}\right) -t\ln t\right) =0,
\end{eqnarray*}%
where the first limit, by L'Hospital's rule, is equal to%
\begin{eqnarray*}
&&\lim_{t\rightarrow 0^{+}}\frac{\epsilon K_{x}\left( 1+\epsilon
t,1+\epsilon ^{\ast }t\right) +\epsilon ^{\ast }K_{y}\left( 1+\epsilon
t,1+\epsilon ^{\ast }t\right) -1/2}{2t} \\
&=&\lim_{t\rightarrow 0^{+}}\frac{\epsilon ^{2}K_{xx}\left( 1+\epsilon
t,1+\epsilon ^{\ast }t\right) +2\epsilon \epsilon ^{\ast }K_{xy}\left(
1+\epsilon t,1+\epsilon ^{\ast }t\right) +\epsilon ^{\ast }K_{yy}\left(
1+\epsilon t,1+\epsilon ^{\ast }t\right) }{2} \\
&=&\frac{\epsilon ^{2}K_{xx}\left( 1,1\right) +2\epsilon \epsilon ^{\ast
}K_{xy}\left( 1,1\right) +\epsilon ^{\ast }K_{yy}\left( 1,1\right) }{2}=-%
\frac{\left( 2\epsilon -1\right) ^{2}}{2}K_{xy}\left( 1,1\right) ,
\end{eqnarray*}%
while the second one is clearly equal to zero.

The proof ends.
\end{proof}

By the above three theorems, the following assertion is immediate.

\begin{corollary}
\label{MCg-form1}Suppose that

(i) the function $K:\mathbb{R}^{2}\rightarrow \mathbb{R}$ is a symmetric,
homogeneous and twice differentiable mean;

(ii) the functions $M:\left( 0,\infty \right) \times \left( 0,\infty \right)
\rightarrow \left( 0,\infty \right) $ and $N:\mathbb{R}^{2}\rightarrow 
\mathbb{R}$ are two symmetric, homogeneous, and differentiable means;

(iii) the function $r:\left( 0,\infty \right) \rightarrow \left( -\infty
,\infty \right) $ satisfies $\lim_{x\rightarrow \infty }r\left( x\right) =0$.

Then for fixed real numbers $\epsilon ,\epsilon ^{\ast },\theta ,\theta
^{\ast },\sigma ,\sigma ^{\ast }$ with $\epsilon +\epsilon ^{\ast }=\theta
+\theta ^{\ast }=\sigma +\sigma ^{\ast }=1$ such that $x>-\min \left(
1,\theta ,\theta ^{\ast }\right) $, we have%
\begin{equation*}
\Gamma \left( x+1\right) \thicksim \sqrt{2\pi }M\left( x+\theta ,x+\theta
^{\ast }\right) ^{K\left( x+\epsilon ,x+\epsilon ^{\ast }\right)
}e^{-N\left( x+\sigma ,x+\sigma ^{\ast }\right) }e^{r\left( x\right) },\text{
as }x\rightarrow \infty .
\end{equation*}
\end{corollary}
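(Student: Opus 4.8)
The plan is to obtain the Corollary by grafting the exponent $K(x+\epsilon ,x+\epsilon ^{\ast })$ of Theorem~\ref{MT-p2=p3=x+1/2} onto the base $M(x+\theta ,x+\theta ^{\ast })$ of Theorem~\ref{MT-p2><p3}, using that this replacement alters the approximant only by a factor tending to $1$. Since $\lim_{x\to \infty }r(x)=0$, the factor $e^{r(x)}$ is inessential, so the assertion is equivalent to
\[
\Gamma (x+1)\thicksim \sqrt{2\pi }\,M(x+\theta ,x+\theta ^{\ast })^{K(x+\epsilon ,x+\epsilon ^{\ast })}e^{-N(x+\sigma ,x+\sigma ^{\ast })},\qquad x\to \infty .
\]
Abbreviate $M=M(x+\theta ,x+\theta ^{\ast })$, $K=K(x+\epsilon ,x+\epsilon ^{\ast })$, $N=N(x+\sigma ,x+\sigma ^{\ast })$. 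Hypotheses (ii)--(iii) let me apply Theorem~\ref{MT-p2><p3} (with the trivial choice $r\equiv 0$) to get $\Gamma (x+1)\thicksim \sqrt{2\pi }\,M^{x+1/2}e^{-N}$. Writing
\[
M^{K}e^{-N}=M^{x+1/2}e^{-N}\cdot M^{\,K-(x+1/2)},
\]
it therefore suffices to prove that $M^{\,K-(x+1/2)}\to 1$, i.e. that $\bigl(K-(x+\tfrac{1}{2})\bigr)\ln M(x+\theta ,x+\theta ^{\ast })\to 0$ as $x\to \infty $.

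To establish this I would combine two facts. First, hypothesis (i) lets me invoke Theorem~\ref{MT-p2=p3=x+1/2}; its proof shows $D_{6}=\lim_{x\to \infty }\bigl(K-(x+\tfrac{1}{2})\bigr)\ln (x+\tfrac{1}{2})=0$, and in particular $K-(x+\tfrac{1}{2})\to 0$. Second, since $M$ is a mean,
\[
\frac{x+\min (\theta ,\theta ^{\ast })}{x+1/2}<\frac{M(x+\theta ,x+\theta ^{\ast })}{x+1/2}<\frac{x+\max (\theta ,\theta ^{\ast })}{x+1/2},
\]
so $M(x+\theta ,x+\theta ^{\ast })/(x+\tfrac{1}{2})\to 1$ and hence $\ln M(x+\theta ,x+\theta ^{\ast })=\ln (x+\tfrac{1}{2})+o(1)$. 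Consequently
\[
\bigl(K-(x+\tfrac{1}{2})\bigr)\ln M=\bigl(K-(x+\tfrac{1}{2})\bigr)\ln (x+\tfrac{1}{2})+\bigl(K-(x+\tfrac{1}{2})\bigr)\cdot o(1)\to 0,
\]
the first summand by $D_{6}=0$ and the second because $K-(x+\tfrac{1}{2})\to 0$. Thus $M^{\,K-(x+1/2)}\to 1$, and multiplying the relation $\Gamma (x+1)\thicksim \sqrt{2\pi }\,M^{x+1/2}e^{-N}$ by this factor yields the Corollary. (Only Theorems~\ref{MT-p2><p3} and~\ref{MT-p2=p3=x+1/2} are actually needed here; Theorem~\ref{MT-p2=p3} is the relevant ingredient for the companion "$P_{2}=P_{3}$" form.)

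The one delicate point is the product $\bigl(K-(x+\tfrac{1}{2})\bigr)\ln M$, where a factor tending to $0$ meets a factor tending to $+\infty$, so continuity alone is not enough. What rescues it is that Theorem~\ref{MT-p2=p3=x+1/2} delivers not merely $K-(x+\tfrac{1}{2})\to 0$ but the sharper rate statement $\bigl(K-(x+\tfrac{1}{2})\bigr)\ln (x+\tfrac{1}{2})\to 0$, together with the elementary observation that $\ln M(x+\theta ,x+\theta ^{\ast })$ and $\ln (x+\tfrac{1}{2})$ are asymptotically equal. Everything else is bookkeeping, and no new limit evaluations in the style of $D_{1},\dots ,D_{6}$ are required.
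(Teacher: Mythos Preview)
Your argument is correct and is exactly the detailed version of what the paper does: the paper simply states that the corollary is ``immediate'' from the three preceding theorems, and you have spelled out how to glue Theorem~\ref{MT-p2><p3} and Theorem~\ref{MT-p2=p3=x+1/2} together via the factorisation $M^{K}=M^{x+1/2}\cdot M^{K-(x+1/2)}$. Your observation that Theorem~\ref{MT-p2=p3} is not actually needed here (it is the input for Corollary~\ref{MCg-form2} instead) is accurate.
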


\begin{corollary}
\label{MCg-form2}Suppose that

(i) the function $K:\left( -\infty ,\infty \right) ^{2}\rightarrow \left(
-\infty ,\infty \right) $ is a symmetric, homogeneous and twice
differentiable mean;

(ii) the functions $M,N:\left( 0,\infty \right) ^{2}\rightarrow \left(
0,\infty \right) $ are two means;

(iii) the function $r:\left( 0,\infty \right) \rightarrow \left( -\infty
,\infty \right) $ satisfies $\lim_{x\rightarrow \infty }r\left( x\right) =0$.

Then for fixed real numbers $\epsilon ,\epsilon ^{\ast },\theta ,\sigma $
with $\epsilon +\epsilon ^{\ast }=1$ such that $x>-\min \left( 1,\theta
,\sigma \right) $, we have 
\begin{equation*}
\Gamma \left( x+1\right) \thicksim \sqrt{2\pi }M\left( x+\theta ,x+\sigma
\right) ^{K\left( x+\epsilon ,x+\epsilon ^{\ast }\right) }e^{-M\left(
x+\theta ,x+\sigma \right) }e^{r\left( x\right) },\text{ as }x\rightarrow
\infty .
\end{equation*}
\end{corollary}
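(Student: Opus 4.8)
The plan is to derive Corollary \ref{MCg-form2} by combining Theorem \ref{MT-p2=p3} with Theorem \ref{MT-p2=p3=x+1/2}, in the same spirit that Corollary \ref{MCg-form1} follows from Theorems \ref{MT-p2><p3} and \ref{MT-p2=p3=x+1/2}. The target asymptotic relation differs from Theorem \ref{MT-p2=p3} only in that the exponent $x+1/2$ on the base $M(x+\theta,x+\sigma)$ has been replaced by $K(x+\epsilon,x+\epsilon^{\ast})$. So the natural strategy is to show that this replacement changes the logarithm of the right-hand side only by a quantity that tends to $0$ as $x\to\infty$, which can then be absorbed into the error term $r(x)$.

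Concretely, I would first reduce the statement, exactly as in the proofs above, to showing
\begin{equation*}
\lim_{x\to\infty}\Bigl(\ln\Gamma(x+1)-\ln\sqrt{2\pi}-K(x+\epsilon,x+\epsilon^{\ast})\ln M(x+\theta,x+\sigma)+M(x+\theta,x+\sigma)\Bigr)=0.
\end{equation*}
By Theorem \ref{MT-p2=p3} we already know the analogous limit with $K(x+\epsilon,x+\epsilon^{\ast})$ replaced by $x+1/2$ holds. Subtracting, it suffices to prove
\begin{equation*}
\lim_{x\to\infty}\Bigl(K(x+\epsilon,x+\epsilon^{\ast})-\bigl(x+\tfrac12\bigr)\Bigr)\ln M(x+\theta,x+\sigma)=0.
\end{equation*}
Now $K(x+\epsilon,x+\epsilon^{\ast})-(x+1/2)$ is handled precisely by the computation inside the proof of Theorem \ref{MT-p2=p3=x+1/2}: the substitution $x=1/t$ plus two applications of L'Hospital's rule (using symmetry, homogeneity, twice differentiability of $K$, and Lemma \ref{Lemma M}) show that this difference is $O(t)=O(1/x)$ as $x\to\infty$. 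On the other hand $\ln M(x+\theta,x+\sigma)$ grows only like $\ln x$, since $x+\min(\theta,\sigma)\le M(x+\theta,x+\sigma)\le x+\max(\theta,\sigma)$ by the mean property. Hence the product behaves like $O\bigl((\ln x)/x\bigr)\to 0$, which is what we need.

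I expect the only mild subtlety — the "main obstacle," such as it is — to be bookkeeping: Theorem \ref{MT-p2=p3} requires $M$ merely to be a mean on $(0,\infty)^2$ (no homogeneity or differentiability), which matches hypothesis (ii) of the corollary, so no extra regularity on $M$ or $N$ is needed here and the two theorems compose cleanly. One should also note that Theorem \ref{MT-p2=p3=x+1/2} is stated with its own error term $r$, so strictly speaking I would apply it with $r\equiv 0$ to extract the clean limit $K(x+\epsilon,x+\epsilon^{\ast})-(x+1/2)=o(1)$, and in fact re-examine its proof to get the sharper rate $O(1/x)$; but even the bare fact that this difference tends to $0$, combined with $\ln M = \ln x + o(\ln x)$... is not quite enough, so the $O(1/x)$ rate is the point worth stating explicitly. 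Once that product limit is in hand, the remaining equivalence follows by adding back the limit supplied by Theorem \ref{MT-p2=p3} and re-absorbing the vanishing discrepancy into $e^{r(x)}$, completing the proof.
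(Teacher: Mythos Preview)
Your proposal is correct and matches the paper's approach exactly: the paper gives no separate proof for this corollary, declaring it (together with Corollary \ref{MCg-form1}) immediate from Theorems \ref{MT-p2><p3}--\ref{MT-p2=p3=x+1/2}, and your reduction via Theorem \ref{MT-p2=p3} combined with the $O(1/x)$ rate for $K(x+\epsilon,x+\epsilon^{\ast})-(x+\tfrac12)$ extracted from the proof of Theorem \ref{MT-p2=p3=x+1/2} is a faithful unpacking of that remark. As a minor simplification of the point you flag as the ``main obstacle,'' you can in fact avoid the rate extraction entirely by writing $\ln M(x+\theta,x+\sigma)=\ln(x+\tfrac12)+o(1)$ and combining the limit $D_{6}=0$ already established in Theorem \ref{MT-p2=p3=x+1/2} with the elementary bound $\bigl|K(x+\epsilon,x+\epsilon^{\ast})-(x+\tfrac12)\bigr|\le\bigl|\epsilon-\tfrac12\bigr|$ coming from the mean property.
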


Further, it is obvious that our ideas constructing asymptotic formulas for
the gamma function in terms of bivariate means can be extended to the psi
and polygamma functions.

\begin{theorem}
Let $M:\left( 0,\infty \right) ^{2}\rightarrow \left( 0,\infty \right) $ be
a mean and let $r$ be defined on $\left( 0,\infty \right) $ satisfying $%
\lim_{x\rightarrow \infty }r\left( x\right) =0$. Then for fixed real numbers 
$\theta $, $\sigma $ such that $x>-\min \left( 1,\theta ,\sigma \right) $,
the asymptotic formula for the psi function%
\begin{equation*}
\psi \left( x+1\right) \thicksim \ln M\left( x+\theta ,x+\sigma \right)
+r\left( x\right)
\end{equation*}%
holds as $x\rightarrow \infty $.
\end{theorem}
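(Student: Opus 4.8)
The plan is to mimic the structure of the proofs of Theorems~\ref{MT-p2><p3} and~\ref{MT-p2=p3}: reduce the asymptotic relation to a statement about a vanishing limit, invoke a known asymptotic expansion for $\psi$, and then control the discrepancy between $\ln M(x+\theta,x+\sigma)$ and that expansion using only the mean property of $M$. Since $\lim_{x\to\infty}r(x)=0$, the claimed formula is equivalent to
\begin{equation*}
\lim_{x\to\infty}\bigl(\psi(x+1)-\ln M(x+\theta,x+\sigma)\bigr)=0.
\end{equation*}
Here I would use the classical fact that $\psi(x+1)=\ln x+O(1/x)$ as $x\to\infty$ (equivalently $\psi(x+1)-\ln(x+1/2)\to 0$); one may take either $\ln x$ or $\ln(x+1/2)$ as the comparison point, and $\ln(x+1/2)$ is the most convenient to match the earlier proofs. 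Thus it suffices to show
\begin{equation*}
D:=\lim_{x\to\infty}\Bigl(\ln M(x+\theta,x+\sigma)-\ln\bigl(x+\tfrac12\bigr)\Bigr)=\lim_{x\to\infty}\ln\frac{M(x+\theta,x+\sigma)}{x+1/2}=0.
\end{equation*}

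The key step is the squeeze: by the defining property of a bivariate mean,
\begin{equation*}
\min(x+\theta,x+\sigma)\le M(x+\theta,x+\sigma)\le\max(x+\theta,x+\sigma),
\end{equation*}
so dividing by $x+1/2$ gives
\begin{equation*}
\frac{x+\min(\theta,\sigma)}{x+1/2}\le\frac{M(x+\theta,x+\sigma)}{x+1/2}\le\frac{x+\max(\theta,\sigma)}{x+1/2}.
\end{equation*}
Both bounding fractions tend to $1$ as $x\to\infty$, hence $M(x+\theta,x+\sigma)/(x+1/2)\to 1$, and by continuity of $\ln$ at $1$ we get $D=0$. Notice that, unlike Theorem~\ref{MT-p2><p3}, no homogeneity, symmetry, or differentiability of $M$ is needed here, because there is no factor $x+1/2$ multiplying the logarithm to amplify the error — the bare $\ln$ of a quantity tending to $1$ already vanishes. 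This matches the hypotheses in the statement, which only ask that $M$ be a mean.

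There is no real obstacle; the only point requiring a word of care is citing the asymptotic $\psi(x+1)-\ln(x+1/2)\to 0$, which can be recorded as a ``known relation'' exactly as the $\ln\Gamma$ analogue is used in the proof of Theorem~\ref{MT-p2><p3} (it follows, e.g., by differentiating Stirling's asymptotic expansion for $\ln\Gamma$, or directly from $\psi(x+1)=\ln x+\frac{1}{2x}-\cdots$). One should also note the condition $x>-\min(1,\theta,\sigma)$ merely ensures $x+\theta,x+\sigma>0$ so that $M(x+\theta,x+\sigma)$ is defined and positive and its logarithm makes sense; it plays no further role since we are taking $x\to\infty$. I would therefore write the proof in three short movements: (1) reduce to $D=0$ via $r(x)\to 0$ and the known $\psi$-asymptotic; (2) bound $M(x+\theta,x+\sigma)/(x+1/2)$ between two explicit fractions using the mean inequality; (3) apply the squeeze and continuity of $\ln$ to conclude.
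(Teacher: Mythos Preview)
Your proposal is correct and is essentially the same squeeze argument as the paper's: the paper bounds $\psi(x+1)-\ln M(x+\theta,x+\sigma)$ directly between $\psi(x+1)-\ln(x+\max(\theta,\sigma))$ and $\psi(x+1)-\ln(x+\min(\theta,\sigma))$ and lets both ends go to $0$, whereas you first peel off $\psi(x+1)-\ln(x+\tfrac12)\to 0$ and then squeeze $\ln\bigl(M/(x+\tfrac12)\bigr)$ to $0$. These are cosmetic rearrangements of the same idea, with the same hypotheses used in the same way.
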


\begin{proof}
It suffices to prove%
\begin{equation*}
\lim_{x\rightarrow \infty }\left( \psi \left( x+1\right) -\ln M\left(
x+\theta ,x+\sigma \right) \right) =0.
\end{equation*}%
Since $M$ is a mean, we have $x+\min \left( \theta ,\sigma \right) \leq
M\left( x+\theta ,x+\sigma \right) \leq x+\max \left( \theta ,\sigma \right) 
$, and so%
\begin{equation*}
\psi \left( x+1\right) -\ln \left( x+\max \left( \theta ,\sigma \right)
\right) <\psi \left( x+1\right) -\ln M\left( x+\theta ,x+\sigma \right)
<\psi \left( x+1\right) -\ln \left( x+\min \left( \theta ,\sigma \right)
\right) ,
\end{equation*}%
which yields the inquired result due to%
\begin{equation*}
\lim_{x\rightarrow \infty }\left( \psi \left( x+1\right) -\ln \left( x+\max
\left( \theta ,\sigma \right) \right) \right) =\lim_{x\rightarrow \infty
}\left( \psi \left( x+1\right) -\ln \left( x+\min \left( \theta ,\sigma
\right) \right) \right) =0.
\end{equation*}
\end{proof}

\begin{theorem}
Let $M:\left( 0,\infty \right) ^{2}\rightarrow \left( 0,\infty \right) $ be
a mean and let $r$ be defined on $\left( 0,\infty \right) $ satisfying $%
\lim_{x\rightarrow \infty }r\left( x\right) =0$. Then for fixed real numbers 
$\theta ,\sigma $ such that $x>-\min \left( 1,\theta ,\sigma \right) $, the
asymptotic formula for the polygamma function%
\begin{equation*}
\psi ^{(n)}\left( x+1\right) \thicksim \frac{\left( -1\right) ^{n-1}\left(
n-1\right) !}{M^{n}\left( x+\theta ,x+\sigma \right) }+r\left( x\right)
\end{equation*}%
holds as $x\rightarrow \infty $.
\end{theorem}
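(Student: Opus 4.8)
The plan is to follow the template of the preceding theorem for the psi function, only now the comparison function for $\psi^{(n)}$ is $(-1)^{n-1}(n-1)!/M^{n}(x+\theta,x+\sigma)$ instead of $\ln M(x+\theta,x+\sigma)$. Since $\lim_{x\to\infty}r(x)=0$, the asserted asymptotic relation is equivalent to
\begin{equation*}
\lim_{x\to\infty}\left(\psi^{(n)}(x+1)-\frac{(-1)^{n-1}(n-1)!}{M^{n}(x+\theta,x+\sigma)}\right)=0,
\end{equation*}
so the task reduces to controlling the two terms. For the polygamma term I would use the series representation $\psi^{(n)}(x+1)=(-1)^{n+1}n!\sum_{j\ge 1}(x+j)^{-(n+1)}$, which for $n\ge 1$ gives $|\psi^{(n)}(x+1)|\le n!\int_{0}^{\infty}(x+t)^{-(n+1)}\,dt=(n-1)!/x^{n}$, hence $\psi^{(n)}(x+1)\to 0$ as $x\to\infty$. (Equivalently one may simply invoke the classical asymptotic $\psi^{(n)}(x)\sim(-1)^{n-1}(n-1)!/x^{n}$.)

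For the mean term, the defining property of $M$ on $(0,\infty)^{2}$ yields $x+\min(\theta,\sigma)\le M(x+\theta,x+\sigma)\le x+\max(\theta,\sigma)$, whence $M(x+\theta,x+\sigma)\to\infty$ and so $M^{n}(x+\theta,x+\sigma)\to\infty$; therefore $(-1)^{n-1}(n-1)!/M^{n}(x+\theta,x+\sigma)\to 0$. Adding the two limits gives $0-0=0$, which is exactly what is required, and the proof is complete. If one prefers an argument that mirrors the squeeze used in the psi theorem, one instead sandwiches $(-1)^{n-1}(n-1)!/M^{n}(x+\theta,x+\sigma)$ between its values at $M=x+\min(\theta,\sigma)$ and $M=x+\max(\theta,\sigma)$, noting that the direction of the monotonicity of $t\mapsto(-1)^{n-1}(n-1)!/t^{n}$ depends only on the parity of $n$, and then applies $\psi^{(n)}(x+1)-(-1)^{n-1}(n-1)!/(x+c)^{n}\to 0$ for each fixed $c\in\{\min(\theta,\sigma),\max(\theta,\sigma)\}$, which follows from the asymptotic expansion of $\psi^{(n)}$.

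I do not expect any genuine obstacle here: the statement is in fact easier than its psi analogue, because for $n\ge 1$ both compared quantities individually tend to $0$, so there is no delicate cancellation to exploit. The only point needing a little care — and only if one insists on running the formal squeeze rather than the "both terms vanish" observation — is the sign bookkeeping coming from the factor $(-1)^{n-1}$, i.e. keeping track of whether $n$ is odd or even when reversing the inequalities under the map $t\mapsto 1/t^{n}$.
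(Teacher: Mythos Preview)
Your proof is correct, and your primary argument is genuinely simpler than the paper's. You observe that for $n\ge 1$ both $\psi^{(n)}(x+1)$ and $(-1)^{n-1}(n-1)!/M^{n}(x+\theta,x+\sigma)$ tend to $0$ separately, hence so does their difference; this settles the claim immediately. The paper instead runs the explicit squeeze you sketch as your alternative: it invokes the Guo--Qi double inequality
\[
\frac{k!}{2x^{k+1}}<(-1)^{k+1}\psi^{(k)}(x)-\frac{(k-1)!}{x^{k}}<\frac{k!}{x^{k+1}}
\]
to obtain $(-1)^{n-1}\psi^{(n)}(x+1)-(n-1)!/(x+1)^{n}\to 0$, sandwiches $(n-1)!/M^{n}$ between its values at $x+\min(\theta,\sigma)$ and $x+\max(\theta,\sigma)$, and finishes by decomposing $(-1)^{n-1}\psi^{(n)}(x+1)-(n-1)!/(x+a)^{n}$ through the intermediate term $(n-1)!/(x+1)^{n}$. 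Your diagnosis that this machinery is unnecessary---because, unlike the psi case, the two compared quantities individually vanish and there is no cancellation to track---is exactly right. The paper's route is more in the spirit of establishing the stronger ratio asymptotic $\psi^{(n)}(x+1)\big/\bigl((-1)^{n-1}(n-1)!/M^{n}\bigr)\to 1$, but as written it too only concludes that the difference tends to $0$, so your shortcut loses nothing relative to what is actually claimed.
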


\begin{proof}
It suffices to show%
\begin{equation*}
\lim_{x\rightarrow \infty }\left( \left( -1\right) ^{n-1}\psi ^{(n)}\left(
x+1\right) -\frac{\left( n-1\right) !}{M^{n}\left( x+\theta ,x+\sigma
\right) }\right) =0.
\end{equation*}%
For this purpose, we utilize a known double inequality that for $k\in 
\mathbb{N}$%
\begin{equation*}
\frac{(k-1)!}{x^{k}}+\frac{k!}{2x^{k+1}}<\left( -1\right) ^{k+1}\psi
^{(k)}\left( x\right) <\frac{(k-1)!}{x^{k}}+\frac{k!}{x^{k+1}}
\end{equation*}%
holds on $(0,\infty )$ proved by Guo and Qi in \cite[Lemma 3]%
{Guo-BKMS-47(1)-2010} to get%
\begin{equation*}
\frac{k!}{2x^{k+1}}<\left( -1\right) ^{k+1}\psi ^{(k)}\left( x\right) -\frac{%
(k-1)!}{x^{k}}<\frac{k!}{x^{k+1}}.
\end{equation*}%
This implies that%
\begin{equation}
\lim_{x\rightarrow \infty }\left( \left( -1\right) ^{k-1}\psi ^{(k)}\left(
x\right) -\frac{(k-1)!}{x^{k}}\right) =0.  \label{GQ}
\end{equation}%
On the other hand, without loss of generality, we assume that $\theta \leq
\sigma $. By the property of mean, we see that 
\begin{equation*}
x+\theta \leq M\left( x+\theta ,x+\sigma \right) \leq x+\sigma ,
\end{equation*}%
and so%
\begin{eqnarray*}
\left( -1\right) ^{n-1}\psi ^{(n)}\left( x+1\right) -\frac{\left( n-1\right)
!}{\left( x+\theta \right) ^{n}} &<&\left( -1\right) ^{n-1}\psi ^{(n)}\left(
x+1\right) -\frac{\left( n-1\right) !}{M^{n}\left( x+\theta ,x+\sigma
\right) } \\
&<&\left( -1\right) ^{n-1}\psi ^{(n)}\left( x+1\right) -\frac{1}{\left(
x+\sigma \right) ^{n}}.
\end{eqnarray*}%
Then, by (\ref{GQ}), for $a=\theta ,\sigma $, we get%
\begin{eqnarray*}
&&\left( -1\right) ^{n-1}\psi ^{(n)}\left( x+1\right) -\frac{\left(
n-1\right) !}{\left( x+a\right) ^{n}} \\
&=&\left( \left( -1\right) ^{n-1}\psi ^{(n)}\left( x+1\right) -\frac{(n-1)!}{%
\left( x+1\right) ^{n}}\right) +\left( \frac{(n-1)!}{\left( x+1\right) ^{n}}-%
\frac{\left( n-1\right) !}{\left( x+a\right) ^{n}}\right) \\
&\rightarrow &0+0=0\text{, as }x\rightarrow \infty ,
\end{eqnarray*}%
which gives the desired result.

Thus we complete the proof.
\end{proof}

\section{Examples}

In this section, we will list some examples to illustrate applications of
Theorems \ref{MT-p2><p3} and \ref{MT-p2=p3}. To this end, we first recall
the arithmetic mean $A$, geometric mean $G$, and identric (exponential) mean 
$I$ of two positive numbers $a$ and $b$ defined by%
\begin{eqnarray*}
A\left( a,b\right) &=&\frac{a+b}{2}\text{, \ \ \ }G\left( a,b\right) =\sqrt{%
ab}, \\
\mathcal{I}\left( a,b\right) &=&\left( b^{b}/a^{a}\right) ^{1/\left(
b-a\right) }/e\text{ if }a\neq b\text{ and }I\left( a,a\right) =a,
\end{eqnarray*}%
(see \cite{Stolarsky-MM-48-1975}, \cite{Yang-MPT-4-1987}). Clearly, these
means are symmetric and homogeneous. Another possible mean is defined by%
\begin{equation}
H_{^{p_{k};q_{k}}}^{n,n-1}\left( a,b\right) =\frac{%
\sum_{k=0}^{n}p_{k}a^{k}b^{n-k}}{\sum_{k=0}^{n-1}q_{k}a^{k}b^{n-1-k}},
\label{H^n,n-1}
\end{equation}%
where%
\begin{equation}
\sum_{k=0}^{n}p_{k}=\sum_{k=0}^{n-1}q_{k}=1.  \label{pk-qk1}
\end{equation}%
It is clear that $H_{^{p_{k};q_{k}}}^{n,n-1}\left( a,b\right) $ is
homogeneous and satisfies $H_{^{p_{k};q_{k}}}^{n,n-1}\left( a,a\right) =a$.

When $p_{k}=p_{n-k}$ and $q_{k}=q_{n-1-k}$, we denote $%
H_{^{p_{k};q_{k}}}^{n,n-1}\left( a,b\right) $ by $S_{^{p_{k};q_{k}}}^{n,n-1}%
\left( a,b\right) $, which can be expressed as%
\begin{equation}
S_{^{p_{k};q_{k}}}^{n,n-1}\left( a,b\right) =\frac{\sum_{k=0}^{[n/2]}p_{k}%
\left( ab\right) ^{k}\left( a^{n-2k}+b^{n-2k}\right) }{\sum_{k=0}^{[\left(
n-1\right) /2]}q_{k}\left( ab\right) ^{k}\left( a^{n-1-2k}+b^{n-1-2k}\right) 
},  \label{S^n,n-1}
\end{equation}%
where $p_{k}$ and $q_{k}$ satisfy%
\begin{equation}
\sum_{k=0}^{[n/2]}\left( 2p_{k}\right) =\sum_{k=0}^{[\left( n-1\right)
/2]}\left( 2q_{k}\right) =1,  \label{pk-qk2}
\end{equation}%
$[x]$ denotes the integer part of real number $x$. Evidently, $%
S_{^{p_{k};q_{k}}}^{n,n-1}$ is symmetric and homogeneous, and $%
S_{^{p_{k};q_{k}}}^{n,n-1}\left( a,a\right) =a$. But $%
H_{^{p_{k};q_{k}}}^{n,n-1}\left( a,b\right) $ and $%
S_{^{p_{k};q_{k}}}^{n,n-1}\left( a,b\right) $ are not always means of $a$
and $b$. For instance, when $p=2/3$, 
\begin{equation*}
S_{^{p;1/2}}^{2,1}\left( a,b\right) =\frac{pa^{2}+pb^{2}+\left( 1-2p\right)
ab}{\left( a+b\right) /2}=\frac{2}{3}\frac{2a^{2}+2b^{2}-ab}{a+b}>\max (a,b)
\end{equation*}%
in the case of $\max (a,b)>4\min \left( a,b\right) $. Indeed, it is easy to
prove that $S_{^{p;1/2}}^{2,1}\left( a,b\right) $ is a mean if and only if $%
p\in \lbrack 0,1/2]$.

Secondly, we recall the so-called completely monotone functions. A function $%
f$ is said to be completely monotonic on an interval $I$ , if $f$ has
derivatives of all orders on $I$ and satisfies

\begin{equation}
(-1)^{n}f^{(n)}(x)\geq 0\text{ for all }x\in I\text{ and }n=0,1,2,....
\label{cm}
\end{equation}

If the inequality (\ref{cm}) is strict, then $f$ is said to be strictly
completely monotonic on $I$. It is known (Bernstein's Theorem) that $f$ is
completely monotonic on $(0,\infty )$ if and only if

\begin{equation*}
f(x)=\int_{0}^{\infty }e^{-xt}d\mu \left( t\right) ,
\end{equation*}%
where $%
\mu
$ is a nonnegative measure on $[0,\infty )$ such that the integral converges
for all $x>0$, see \cite[p. 161]{Widder-PUPP-1941}.

\begin{example}
Let 
\begin{eqnarray*}
K\left( a,b\right) &=&N\left( a,b\right) =A\left( a,b\right) =\frac{a+b}{2},
\\
M\left( a,b\right) &=&A^{2/3}\left( a,b\right) G^{1/3}\left( a,b\right)
=\left( \frac{a+b}{2}\right) ^{2/3}\left( \sqrt{ab}\right) ^{1/3}
\end{eqnarray*}%
and $\theta =\sigma =0$ in Theorem \ref{MT-p2><p3}. Then we can obtain an
asymptotic formulas for the gamma function as follows.%
\begin{eqnarray*}
\ln \Gamma (x+1) &\thicksim &\frac{1}{2}\ln 2\pi +\left( x+\frac{1}{2}%
\right) \ln \left( \left( x+\frac{1}{2}\right) ^{2/3}\left( \sqrt{x\left(
x+1\right) }\right) ^{1/3}\right) -\left( x+\frac{1}{2}\right) \\
&=&\frac{1}{2}\ln 2\pi +\frac{2}{3}\left( x+\frac{1}{2}\right) \ln \left( x+%
\frac{1}{2}\right) +\frac{1}{6}\left( x+\frac{1}{2}\right) \ln x \\
&&+\frac{1}{6}\left( x+\frac{1}{2}\right) \ln \left( x+1\right) -\left( x+%
\frac{1}{2}\right) ,\text{ as }x\rightarrow \infty .
\end{eqnarray*}
\end{example}

Further, we can prove

\begin{proposition}
For $x>0$, the function 
\begin{eqnarray*}
f_{1}(x) &=&\ln \Gamma (x+1)-\frac{1}{2}\ln 2\pi -\frac{2}{3}\left( x+\frac{1%
}{2}\right) \ln \left( x+\frac{1}{2}\right) -\frac{1}{6}\left( x+\frac{1}{2}%
\right) \ln x \\
&&-\frac{1}{6}\left( x+\frac{1}{2}\right) \ln \left( x+1\right) +\left( x+%
\frac{1}{2}\right)
\end{eqnarray*}%
is a completely monotone function.
\end{proposition}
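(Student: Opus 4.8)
The plan is to establish two facts: (i) $\lim_{x\to\infty}f_1(x)=0$; and (ii) $-f_1'$ is completely monotone on $(0,\infty)$. These suffice, for if $-f_1'(x)=\int_0^\infty e^{-xt}\,d\mu(t)$ with $\mu$ a nonnegative measure, then by (i) and Tonelli
\begin{equation*}
f_1(x)=\int_x^\infty\bigl(-f_1'(u)\bigr)\,du=\int_0^\infty\frac{e^{-xt}}{t}\,d\mu(t),
\end{equation*}
and since $t^{-1}\,d\mu(t)$ is again a nonnegative measure, $f_1$ is completely monotone by differentiation under the integral sign. For (i) I would rewrite
\begin{equation*}
f_1(x)=\Bigl(\ln\Gamma(x+1)-\tfrac12\ln2\pi-\bigl(x+\tfrac12\bigr)\ln\bigl(x+\tfrac12\bigr)+\bigl(x+\tfrac12\bigr)\Bigr)+\tfrac16\bigl(x+\tfrac12\bigr)\ln\frac{(x+1/2)^2}{x(x+1)};
\end{equation*}
the first bracket tends to $0$ by the limit relation already used in the proof of Theorem \ref{MT-p2><p3}, and the second summand equals $\tfrac16(x+\tfrac12)\ln\bigl(1+\tfrac1{4x(x+1)}\bigr)\to0$.

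For (ii) I would differentiate directly; the constant terms cancel, leaving
\begin{equation*}
-f_1'(x)=-\psi(x+1)+\tfrac23\ln\bigl(x+\tfrac12\bigr)+\tfrac16\ln x+\tfrac16\ln(x+1)+\frac1{12x}-\frac1{12(x+1)}.
\end{equation*}
Then I would insert the integral representations $\psi(x+1)=\ln x+\tfrac1{2x}-\int_0^\infty\bigl(\tfrac12-\tfrac1t+\tfrac1{e^t-1}\bigr)e^{-xt}\,dt$ (Binet), $\ln(x+a)-\ln x=\int_0^\infty\frac{1-e^{-at}}{t}e^{-xt}\,dt$ (Frullani), and $\frac1{x+c}=\int_0^\infty e^{-ct}e^{-xt}\,dt$, keeping the logarithms grouped so that the divergent parts cancel (the total coefficient of $\ln x$ is $-\tfrac56+\tfrac23+\tfrac16=0$). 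This yields
\begin{equation*}
-f_1'(x)=\int_0^\infty\phi(t)\,e^{-xt}\,dt,\qquad\phi(t)=-\frac{1+4e^{-t/2}+e^{-t}}{6t}+\frac1{e^t-1}+\frac{1-e^{-t}}{12}.
\end{equation*}
Since $\phi$ is bounded near $0$ (indeed $\phi(t)\to0$ as $t\to0^+$) and $\phi(t)\to\tfrac1{12}$ as $t\to\infty$, this Laplace integral converges for $x>0$; so by Bernstein's theorem it remains only to prove $\phi\ge0$ on $(0,\infty)$.

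The positivity of $\phi$ is the step I expect to be the real obstacle, but it reduces to a tractable one-variable inequality. Multiplying by the positive factor $6t(e^t-1)$ and substituting $t=2u$ turns $\phi(t)\ge0$ into $\Psi(u)\ge0$ for $u>0$, where
\begin{equation*}
\Psi(u)=(u-1)e^{2u}+(u+1)e^{-2u}-4e^u+4e^{-u}+10u.
\end{equation*}
One has $\Psi(0)=\Psi'(0)=0$ and
\begin{equation*}
\Psi''(u)=8\bigl(u\cosh 2u-\sinh u\bigr)\ge8\bigl(u\cosh u-\sinh u\bigr)\ge0\qquad(u\ge0),
\end{equation*}
where the first inequality uses $\cosh 2u\ge\cosh u$ and the second holds because $u\cosh u-\sinh u$ vanishes at $u=0$ with derivative $u\sinh u\ge0$. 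Hence $\Psi$ is convex on $[0,\infty)$ with a double zero at the origin, so $\Psi(u)>0$ for $u>0$, i.e.\ $\phi(t)>0$ for all $t>0$. Together with the reduction above, this proves the proposition.

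Beyond the obstacle just described, the remaining work is bookkeeping: justifying the interchange of the Frullani integrals (handled by grouping the logarithms before integrating), differentiating under the integral sign to pass from the Laplace representation to complete monotonicity, and checking that the constants in $f_1'$ sum to zero and that $\phi(t)/t$ is integrable against $e^{-xt}$ near $t=0$.
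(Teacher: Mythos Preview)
Your proof is correct and follows the same architecture as the paper: write $-f_1'(x)=\int_0^\infty\phi(t)e^{-xt}\,dt$, prove $\phi\ge 0$, and combine with $f_1(\infty)=0$ to get complete monotonicity of $f_1$. Your kernel $\phi(t)$ is exactly the paper's $-e^{-t/2}u(t/2)$ (the paper uses the Gauss integral for $\psi$ where you use Binet, but the outcome is identical). The only real difference is in the positivity step. The paper rewrites the relevant numerator as $t\cosh 2t-\sinh 2t-4\sinh t+5t$ and expands in power series, checking that every coefficient $\bigl((2n-3)2^{2n-2}-4\bigr)/(2n-1)!$ is positive for $n\ge 3$. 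You instead recognize (after $t=2u$) that this numerator is $\tfrac12\Psi(u)$, verify $\Psi(0)=\Psi'(0)=0$, and compute $\Psi''(u)=8(u\cosh 2u-\sinh u)\ge 8(u\cosh u-\sinh u)\ge 0$. Your convexity argument is slightly slicker and avoids the series bookkeeping; the paper's series expansion, on the other hand, gives the sign in one stroke without differentiating twice.
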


\begin{proof}
Differentiating and utilizing the relations 
\begin{equation}
\psi (x)=\int_{0}^{\infty }\left( \frac{e^{-t}}{t}-\frac{e^{-xt}}{1-e^{-t}}%
\right) dt\text{ \ and \ }\ln x=\int_{0}^{\infty }\frac{e^{-t}-e^{-xt}}{t}dt
\label{psi-ln}
\end{equation}%
yield%
\begin{eqnarray*}
f_{1}^{\prime }(x) &=&\psi \left( x+1\right) -\frac{1}{6}\ln \left(
x+1\right) -\frac{1}{6}\ln x-\frac{2}{3}\ln \left( x+\frac{1}{2}\right) +%
\frac{1}{12\left( x+1\right) }-\frac{1}{12x} \\
&=&\int_{0}^{\infty }\left( \frac{e^{-t}}{t}-\frac{e^{-\left( x+1\right) t}}{%
1-e^{-t}}\right) dt-\int_{0}^{\infty }\frac{e^{-t}-e^{-xt}}{6t}%
dt-\int_{0}^{\infty }\frac{e^{-t}-e^{-\left( x+1\right) t}}{6t}dt \\
&&-\int_{0}^{\infty }\frac{2\left( e^{-t}-e^{-\left( x+1/2\right) t}\right) 
}{3t}dt+\frac{1}{12}\int_{0}^{\infty }e^{-\left( x+1\right) t}dt-\frac{1}{12}%
\int_{0}^{\infty }e^{-xt}dt \\
&=&\int_{0}^{\infty }e^{-xt}\left( \frac{1}{6t}+\frac{e^{-t}}{6t}+\frac{%
2e^{-t/2}}{3t}-\frac{e^{-t/2}}{1-e^{-t}}+\frac{1}{12}\left( e^{-t}-1\right)
\right) dt \\
&=&\int_{0}^{\infty }e^{-xt}e^{-t/2}\left( \frac{\cosh \left( t/2\right) }{3t%
}+\frac{2}{3t}-\frac{1}{2\sinh \left( t/2\right) }-\frac{1}{6}\sinh \frac{t}{%
2}\right) dt \\
&:&=\int_{0}^{\infty }e^{-xt}e^{-t/2}u\left( \frac{t}{2}\right) dt,
\end{eqnarray*}%
where%
\begin{equation*}
u\left( t\right) =\frac{\cosh t}{6t}+\frac{1}{3t}-\frac{1}{2\sinh t}-\frac{1%
}{6}\sinh t.
\end{equation*}%
Factoring and expanding in power series lead to%
\begin{eqnarray*}
u\left( t\right) &=&-\frac{t\cosh 2t-\sinh 2t-4\sinh t+5t}{12t\sinh t} \\
&=&-\frac{\sum_{n=1}^{\infty }\frac{2^{2n-2}t^{2n-1}}{\left( 2n-2\right) !}%
-\sum_{n=1}^{\infty }\frac{2^{2n-1}t^{2n-1}}{\left( 2n-1\right) !}%
-4\sum_{n=1}^{\infty }\frac{t^{2n-1}}{\left( 2n-1\right) !}+5t}{12t\sinh
\left( t/2\right) } \\
&=&-\frac{\sum_{n=3}^{\infty }\frac{\left( 2n-3\right) 2^{2n-2}-4}{\left(
2n-1\right) !}t^{2n-1}}{12t\sinh t}<0
\end{eqnarray*}%
for $t>0$. This reveals that $-f_{1}^{\prime }$ is a completely monotone
function, which together with $f_{1}(x)>\lim_{x\rightarrow \infty
}f_{1}(x)=0 $ leads us to the desired result.
\end{proof}

Using the decreasing property of $f_{1}$ on $\left( 0,\infty \right) $ and
notice that%
\begin{equation*}
f_{1}(1)=\ln \frac{2^{3/4}e^{3/2}}{3\sqrt{2\pi }}\text{ \ and \ }%
f_{1}(\infty )=0
\end{equation*}%
we immediately get

\begin{corollary}
For $n\in \mathbb{N}$, it is true that%
\begin{equation*}
\sqrt{2\pi }\left( \frac{(n+1/2)^{4}n\left( n+1\right) }{e^{6}}\right)
^{\left( n+1/2\right) /6}<n!<\frac{2^{3/4}e^{3/2}}{3}\left( \frac{%
(n+1/2)^{4}n\left( n+1\right) }{e^{6}}\right) ^{\left( n+1/2\right) /6},
\end{equation*}%
with the optimal constants $\sqrt{2\pi }\approx 2.5066$ and $%
2^{3/4}e^{3/2}/3\approx 2.5124$.
\end{corollary}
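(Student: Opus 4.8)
The plan is to extract the claimed double inequality directly from the properties of $f_1$ established in the preceding Proposition, by evaluating $f_1$ at the two ends of the interval $[1,\infty)$ and re‑packaging its defining expression as a single logarithm.

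First I would record the two consequences of complete monotonicity that are actually used. Since $f_1$ is completely monotone on $(0,\infty)$, taking $k=0$ and $k=1$ in $(-1)^{k}f_1^{(k)}\ge 0$ gives $f_1\ge 0$ and $f_1'\le 0$; moreover the proof of the Proposition exhibits $-f_1'$ as the Laplace transform of a strictly positive function, so $f_1$ is in fact strictly decreasing on $(0,\infty)$, and together with $\lim_{x\to\infty}f_1(x)=0$ (also obtained there) this yields $f_1(x)>0$ for $x>0$ and $f_1(x)\le f_1(1)$ for $x\ge 1$. Hence for every integer $n\ge 1$,
\[
0<f_1(n)\le f_1(1).
\]

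Next I would compute both bounds explicitly. Substituting $x=1$ into the definition of $f_1$ and using $\Gamma(2)=1$ and $\ln 1=0$ collapses the expression to $f_1(1)=\tfrac32-\tfrac12\ln 2\pi+\ln\tfrac23-\tfrac14\ln 2=\ln\dfrac{2^{3/4}e^{3/2}}{3\sqrt{2\pi}}$. For the middle quantity I would rewrite $f_1(n)$ in closed form: combining the three logarithmic terms via $\tfrac23(n+\tfrac12)\ln(n+\tfrac12)+\tfrac16(n+\tfrac12)\ln n+\tfrac16(n+\tfrac12)\ln(n+1)=\tfrac{n+1/2}{6}\ln\big((n+\tfrac12)^{4}n(n+1)\big)$ and absorbing $n+\tfrac12$ into a factor $e^{6}$ in the denominator gives
\[
f_1(n)=\ln\frac{n!}{\sqrt{2\pi}\left(\dfrac{(n+1/2)^{4}n(n+1)}{e^{6}}\right)^{(n+1/2)/6}}.
\]
Plugging this and the value of $f_1(1)$ into $0<f_1(n)\le f_1(1)$ and exponentiating (the quantity $\sqrt{2\pi}\big((n+1/2)^{4}n(n+1)/e^{6}\big)^{(n+1/2)/6}$ being positive) yields precisely the asserted inequalities. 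Optimality then comes from the same two evaluations: the left constant $\sqrt{2\pi}$ is the infimum $\lim_{n\to\infty}n!\big/\big((n+1/2)^{4}n(n+1)/e^{6}\big)^{(n+1/2)/6}$ because $f_1(n)\to 0$, while the right constant $2^{3/4}e^{3/2}/3$ is attained at $n=1$ because $f_1(1)$ equals its logarithm.

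There is no genuine analytic obstacle here: all the substantive work — complete monotonicity of $f_1$, hence its monotonicity, and the limit at infinity — was already done in the Proposition, so this is essentially a corollary in the literal sense. The only points needing care are bookkeeping: getting the arithmetic of $f_1(1)$ exactly right and recombining the logarithms so that $f_1(n)$ matches the stated power of $(n+1/2)^{4}n(n+1)/e^{6}$. (One should also flag the mild abuse in the statement: at $n=1$ the right‑hand inequality is in fact an equality, so it is strict only for $n\ge 2$, consistent with $2^{3/4}e^{3/2}/3$ being the optimal — i.e.\ attained — constant.)
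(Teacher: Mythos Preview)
Your proposal is correct and follows exactly the paper's approach: the corollary is deduced from the strict decrease of $f_{1}$ on $(0,\infty)$ together with the evaluations $f_{1}(1)=\ln\bigl(2^{3/4}e^{3/2}/(3\sqrt{2\pi})\bigr)$ and $f_{1}(\infty)=0$, then exponentiating. Your extra remark that the upper bound is actually attained at $n=1$ (so the right inequality is an equality there) is a valid observation the paper glosses over.
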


\begin{example}
Let 
\begin{eqnarray*}
K\left( a,b\right) &=&N\left( a,b\right) =A\left( a,b\right) =\frac{a+b}{2},
\\
M\left( a,b\right) &=&\mathcal{I}\left( a,b\right) =\left(
b^{b}/a^{a}\right) ^{1/\left( b-a\right) }/e\text{ if }a\neq b\text{ and }%
I\left( a,a\right) =a
\end{eqnarray*}%
and $\theta =0$ in Theorem \ref{MT-p2><p3}. Then we get the asymptotic
formulas:%
\begin{equation*}
\ln \Gamma (x+1)\thicksim \frac{1}{2}\ln 2\pi +\left( x+\frac{1}{2}\right)
\left( (x+1)\ln (x+1)-x\ln x-1\right) -\left( x+\frac{1}{2}\right) ,
\end{equation*}%
as $x\rightarrow \infty $.
\end{example}

And, we have

\begin{proposition}
For $x>0$, the function 
\begin{equation*}
f_{2}(x)=\ln \Gamma (x+1)-\frac{1}{2}\ln 2\pi -\left( x+\frac{1}{2}\right)
\left( (x+1)\ln (x+1)-x\ln x-1\right) +x+\frac{1}{2}
\end{equation*}%
is a completely monotone function.
\end{proposition}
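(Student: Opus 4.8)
The plan is to mimic the proof of the previous Proposition verbatim, replacing the mean $A^{2/3}G^{1/3}$ by the identric mean $\mathcal{I}$. First I would observe that since $f_2(\infty)=0$ (which follows from Theorem \ref{MT-p2><p3} with $r\equiv 0$, applied to the choices $K=N=A$, $M=\mathcal{I}$, $\theta=0$, $\sigma=0$, whence $\sigma^\ast=1$), it suffices by Bernstein's theorem to show that $-f_2'$ is completely monotone on $(0,\infty)$ and that $f_2$ is positive there; the latter will follow once monotonicity is established. Differentiating $f_2$, using $(d/dx)\bigl((x+1)\ln(x+1)-x\ln x-1\bigr)=\ln(x+1)-\ln x$ and collecting terms, I get
\begin{equation*}
f_2'(x)=\psi(x+1)-\bigl((x+1)\ln(x+1)-x\ln x-1\bigr)-\Bigl(x+\tfrac12\Bigr)\bigl(\ln(x+1)-\ln x\bigr)+1.
\end{equation*}

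Next I would feed this into the integral representations \eqref{psi-ln}, namely $\psi(x+1)=\int_0^\infty\bigl(e^{-t}/t-e^{-(x+1)t}/(1-e^{-t})\bigr)dt+\!\!$ (handled via $\psi(x+1)=\psi(x)+1/x$ or directly) and $\ln(x+a)=\int_0^\infty (e^{-t}-e^{-(x+a)t})/t\,dt$, together with $1=\int_0^\infty e^{-t}\,dt$ to absorb the constant, so that every term becomes a Laplace integral against $e^{-xt}$. After pulling out a common factor $e^{-xt}$ (and, as in the previous proof, a factor $e^{-t/2}$ to symmetrize), the bracketed kernel should reduce to an explicit elementary function $v(t)$ built from $\sinh$, $\cosh$ and $t$; the claim $-f_2'$ completely monotone is then equivalent to $v(t)\le 0$ (equivalently $-v(t)\ge 0$) for $t>0$.

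The final step, and the main obstacle, is proving that sign inequality for $v(t)$. Following the template of the $f_1$ proof, I would clear denominators to write $-v(t)$ as a single fraction whose denominator is manifestly positive on $(0,\infty)$ (something like $12t\sinh t$ or $t^2\sinh t$), and then expand the numerator as a power series in $t$ using $\cosh\alpha t=\sum t^{2n}\alpha^{2n}/(2n)!$ and $\sinh\alpha t=\sum t^{2n+1}\alpha^{2n+1}/(2n+1)!$. The hope — and this is exactly where the argument could get delicate — is that after cancellation of the low-order terms (forced by $f_2(\infty)=0$ and the reflexivity built into the construction) the remaining coefficients are all nonnegative, so the numerator is a positive power series. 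If brute-force coefficient positivity is not transparent, the fallback is to differentiate $v$ one or two more times to peel off elementary positive/negative pieces, or to factor out an obvious positive term and compare the two competing hyperbolic expressions via their Taylor coefficients termwise. Once $v(t)\le 0$ is in hand, $-f_2'$ is completely monotone, hence $f_2$ is decreasing, hence $f_2(x)>f_2(\infty)=0$, and since $f_2$ itself equals $-f_2'$ integrated (up to the constant $0$) plus the fact that all derivatives of order $\ge 2$ inherit the sign from $-f_2'$ being completely monotone, $f_2$ is completely monotone on $(0,\infty)$, completing the proof.
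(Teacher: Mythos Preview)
Your overall architecture (show $-f_2'$ is completely monotone via a Laplace representation, then deduce $f_2>0$ and hence $f_2$ completely monotone) matches the paper's. But the plan to ``mimic the $f_1$ proof verbatim'' at the level of $f_2'$ hits a snag you have not addressed. Computing $f_2'$ gives
\[
f_2'(x)=\psi(x+1)-\Bigl(2x+\tfrac{3}{2}\Bigr)\ln(x+1)+\Bigl(2x+\tfrac{1}{2}\Bigr)\ln x+2,
\]
and the terms $x\ln(x+a)$ are \emph{not} of the form $\int_0^\infty e^{-xt}g(t)\,dt$ with $g$ independent of $x$; your sentence ``every term becomes a Laplace integral against $e^{-xt}$'' is exactly where this breaks. (In the $f_1$ case the logarithms carried constant coefficients, which is why that proof went through at the first-derivative level.) One can rescue this via integration by parts in $t$ using $xe^{-xt}=-\tfrac{d}{dt}e^{-xt}$, but that is extra work you have not budgeted for.

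The paper avoids the issue by differentiating once more:
\[
f_2''(x)=\psi'(x+1)-2\ln(x+1)+2\ln x+\frac{1}{2(x+1)}+\frac{1}{2x},
\]
whose terms all have clean Laplace representations, yielding $f_2''(x)=\int_0^\infty e^{-xt}e^{-t/2}v(t/2)\,dt$ with
\[
v(t)=\frac{t}{\sinh t}-\frac{2\sinh t}{t}+\cosh t.
\]
From $v>0$ one gets $f_2''$ completely monotone, hence $f_2'<f_2'(\infty)=0$, hence $f_2>f_2(\infty)=0$, and the complete monotonicity of $f_2$ follows. For the sign of $v$, your proposed power-series route does succeed (the series for $t\sinh t\cdot v(t)$ has first nonzero term at $t^6$ with all coefficients $\tfrac{2^{2n-2}(2n-4)}{(2n)!}\ge 0$), but the paper takes a slicker path: it observes that $\tfrac{\sinh t}{t}\,v(t)=\bigl(\tfrac{t}{\sinh t}\bigr)^{2}+\tfrac{t}{\tanh t}-2$ and invokes the hyperbolic Wilker inequality $\bigl(\tfrac{t}{\sinh t}\bigr)^{2}+\tfrac{t}{\tanh t}>2$.
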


\begin{proof}
Differentiation gives%
\begin{eqnarray*}
f_{2}^{\prime }(x) &=&\psi \left( x+1\right) -\left( 2x+\frac{3}{2}\right)
\ln \left( x+1\right) +\left( 2x+\frac{1}{2}\right) \ln x+2, \\
f_{2}^{\prime \prime }(x) &=&\psi ^{\prime }\left( x+1\right) -2\ln \left(
x+1\right) +2\ln x+\frac{1}{2\left( x+1\right) }+\frac{1}{2x}.
\end{eqnarray*}%
Application of the relations (\ref{psi-ln}), $f_{2}^{\prime \prime }(x)$ can
be expressed as%
\begin{eqnarray*}
f_{2}^{\prime \prime }(x) &=&\int_{0}^{\infty }t\frac{e^{-\left( x+1\right)
t}}{1-e^{-t}}dt-2\int_{0}^{\infty }\frac{e^{-xt}-e^{-\left( x+1\right) t}}{t}%
dt+\frac{1}{2}\int_{0}^{\infty }\left( e^{-\left( x+1\right)
t}+e^{-xt}\right) dt \\
&=&\int_{0}^{\infty }e^{-xt}\left( \frac{te^{-t}}{1-e^{-t}}-2\frac{1-e^{-t}}{%
t}+\frac{1}{2}\left( e^{-t}+1\right) \right) dt \\
&=&\int_{0}^{\infty }e^{-xt}e^{-t/2}\left( \frac{t}{2\sinh \left( t/2\right) 
}-4\frac{\sinh \left( t/2\right) }{t}+\cosh \frac{t}{2}\right) dt \\
&:&=\int_{0}^{\infty }e^{-xt}e^{-t/2}v\left( \tfrac{t}{2}\right) dt,
\end{eqnarray*}%
where%
\begin{equation*}
v\left( t\right) =\frac{t}{\sinh t}-2\frac{\sinh t}{t}+\cosh t.
\end{equation*}%
Employing hyperbolic version of Wilker inequality proved in \cite%
{Zhu-MIA-10(4)-2007} (also see \cite{Zhu-AAA-485842-2009}, \cite%
{Yang-JIA-2014-166})%
\begin{equation*}
\left( \frac{t}{\sinh t}\right) ^{2}+\frac{t}{\tanh t}>2,
\end{equation*}%
we get%
\begin{equation*}
\frac{\sinh t}{t}v\left( t\right) =\left( \frac{t}{\sinh t}\right) ^{2}+%
\frac{t}{\tanh t}-2>0,
\end{equation*}%
and so $f_{2}^{\prime \prime }(x)$ is complete monotone for $x>0$. Hence, $%
f_{2}^{\prime }(x)<\lim_{x\rightarrow \infty }f_{2}^{\prime }(x)=0$, and
then, $f_{2}(x)>\lim_{x\rightarrow \infty }f_{2}(x)=0$, which indicate that $%
f_{2}$ is complete monotone for $x>0$.

This completes the proof.
\end{proof}

The decreasing property of $f_{2}$ on $\left( 0,\infty \right) $ and the
facts that%
\begin{equation*}
f_{2}\left( 0^{+}\right) =\ln \frac{e}{\sqrt{2\pi }}\text{, \ }f_{2}\left(
1\right) =\ln \frac{e^{3}}{8}\text{, \ }f_{2}\left( \infty \right) =0
\end{equation*}%
give the following

\begin{corollary}
For $x>0$, the sharp double inequality%
\begin{equation*}
\sqrt{2\pi }e^{-2x-1}\frac{(x+1)^{(x+1)\left( x+1/2\right) }}{x^{x\left(
x+1/2\right) }}<\Gamma (x+1)<e^{-2x}\frac{(x+1)^{(x+1)\left( x+1/2\right) }}{%
x^{x\left( x+1/2\right) }}
\end{equation*}%
holds.

For $n\in \mathbb{N}$, it holds that%
\begin{equation*}
\sqrt{2\pi }e^{-2n-1}\frac{(n+1)^{(n+1)\left( n+1/2\right) }}{n^{n\left(
n+1/2\right) }}<n!<\frac{e^{3}}{8}e^{-2n-1}\frac{(n+1)^{(n+1)\left(
n+1/2\right) }}{n^{n\left( n+1/2\right) }}
\end{equation*}%
with the best constants $\sqrt{2\pi }\approx 2.5066$ and $e^{3}/8\approx
2.5107$.
\end{corollary}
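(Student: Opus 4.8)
The plan is to read the claimed inequalities straight off an exponential reformulation of $f_2$, using only the monotonicity of $f_2$ already established in the preceding proposition. First I would merge the two logarithmic terms in the definition of $f_2$: since $\left(x+\tfrac12\right)(x+1)\ln(x+1)=\ln(x+1)^{(x+1)(x+1/2)}$ and $\left(x+\tfrac12\right)x\ln x=\ln x^{x(x+1/2)}$, the definition collapses to
\begin{equation*}
f_2(x)=\ln\Gamma(x+1)-\ln\sqrt{2\pi}-\ln\frac{(x+1)^{(x+1)(x+1/2)}}{x^{x(x+1/2)}}+(2x+1),
\end{equation*}
so that exponentiating gives the exact identity
\begin{equation*}
\Gamma(x+1)=\sqrt{2\pi}\,e^{-2x-1}\,\frac{(x+1)^{(x+1)(x+1/2)}}{x^{x(x+1/2)}}\,e^{f_2(x)},\qquad x>0 .
\end{equation*}

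Next I would invoke the proposition: $f_2$ is completely monotone on $(0,\infty)$, hence strictly decreasing there, with $\lim_{x\to\infty}f_2(x)=0$; thus $0<f_2(x)<f_2(0^+)$ for every $x>0$. Computing the endpoint from the definition (using $\Gamma(1)=1$ and $(x+1)\ln(x+1)\to0$, $x\ln x\to0$ as $x\to0^+$) gives $f_2(0^+)=1-\tfrac12\ln2\pi=\ln\!\big(e/\sqrt{2\pi}\big)$, so $1<e^{f_2(x)}<e/\sqrt{2\pi}$. Substituting these two bounds into the identity above — the leading $\sqrt{2\pi}$ cancelling against $e/\sqrt{2\pi}$ on the right — yields exactly the first double inequality of the corollary.

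For the factorial version I would put $x=n\in\mathbb{N}$ and use the same monotonicity: since $n\ge1$ we have $0=f_2(\infty)<f_2(n)\le f_2(1)$, and evaluating $f_2(1)=-\tfrac12\ln2\pi-\tfrac32(2\ln2-1)+\tfrac32=3-3\ln2-\tfrac12\ln2\pi=\ln\!\big(e^{3}/(8\sqrt{2\pi})\big)$. Hence $1<e^{f_2(n)}\le e^{3}/(8\sqrt{2\pi})$, and substituting into the identity at $x=n$ — the $\sqrt{2\pi}$ again cancelling with the denominator coming from $f_2(1)$ — produces the stated bounds for $n!$. Optimality of the constants then follows at once: the asymptotic formula forces $n!\,e^{2n+1}\,n^{n(n+1/2)}/(n+1)^{(n+1)(n+1/2)}\to\sqrt{2\pi}$ as $n\to\infty$, so no larger constant is admissible on the left, while for $n=1$ the right-hand side equals $n!$ exactly with constant $e^{3}/8$, so no smaller constant is admissible there.

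There is no genuinely hard step: the only analytic ingredient — the complete monotonicity, hence the decrease, of $f_2$ together with $f_2(\infty)=0$ — is supplied verbatim by the preceding proposition, and the rest is bookkeeping. The two points that need care are the algebraic simplification turning the definition of $f_2$ into the clean exponential identity and the evaluation of the boundary values $f_2(0^+)$ and $f_2(1)$; an arithmetic slip in either would show up directly as a wrong constant in the final inequalities.
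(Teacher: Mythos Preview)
Your proposal is correct and follows essentially the same approach as the paper: the paper simply records the values $f_{2}(0^{+})=\ln\!\big(e/\sqrt{2\pi}\big)$, $f_{2}(1)$, $f_{2}(\infty)=0$ and invokes the decreasing property of $f_{2}$ from the preceding proposition, exactly as you do. Your write-up is in fact more careful than the paper's --- you make the exponential identity explicit, compute $f_{2}(1)=\ln\!\big(e^{3}/(8\sqrt{2\pi})\big)$ correctly (the paper's displayed value $f_{2}(1)=\ln(e^{3}/8)$ is a slip, though the final constant $e^{3}/8=\sqrt{2\pi}\,e^{f_{2}(1)}$ is right), and you supply the optimality argument that the paper omits; you also implicitly notice that the upper bound is attained at $n=1$, so the stated strict inequality on the right should really be $\leq$.
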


\begin{example}
\label{E-M3,2}Let%
\begin{eqnarray*}
K\left( a,b\right) &=&N\left( a,b\right) =A\left( a,b\right) =\frac{a+b}{2},
\\
M\left( a,b\right) &=&M_{^{p;q}}^{3,2}\left( a,b\right) =\frac{%
pa^{3}+pb^{3}+\left( 1/2-p\right) a^{2}b+\left( 1/2-p\right) ab^{2}}{%
qa^{2}+qb^{2}+(1-2q)ab} \\
&=&\frac{a+b}{2}\frac{2pa^{2}+2pb^{2}+\left( 1-4p\right) ab}{%
qa^{2}+qb^{2}+\left( 1-2q\right) ab}
\end{eqnarray*}%
and $\theta =0$ in Theorem \ref{MT-p2><p3}, where $p$ and $q$ are parameters
to be determined. Then, we have%
\begin{eqnarray*}
K\left( x,x+1\right) &=&N\left( x,x+1\right) =x+\frac{1}{2}, \\
M\left( x,x+1\right) &=&S_{^{p;q}}^{3,2}\left( x,x+1\right) =\left(
x+1/2\right) \frac{x^{2}+x+2p}{x^{2}+x+q}.
\end{eqnarray*}%
Straightforward computations give%
\begin{eqnarray*}
\lim_{x\rightarrow \infty }\tfrac{\ln \Gamma (x+1)-\ln \sqrt{2\pi }-\left(
x+1/2\right) \ln M_{p;q}^{3,2}\left( x,x+1\right) +x+1/2}{x^{-1}} &=&q-2p-%
\frac{1}{24}, \\
\lim_{x\rightarrow \infty }\tfrac{\ln \Gamma (x+1)-\ln \sqrt{2\pi }-\left(
x+1/2\right) \ln M_{p;2p+1/24}^{3,2}\left( x,x+1\right) +x+1/2}{x^{-3}} &=&-%
\frac{160}{1920}\left( p-\frac{23}{160}\right) ,
\end{eqnarray*}%
and solving the equation set%
\begin{equation*}
q-2p-\frac{1}{24}=0\text{ and }-\frac{160}{1920}\left( p-\frac{23}{160}%
\right) =0
\end{equation*}%
leads to%
\begin{equation*}
p=\frac{23}{160},q=\frac{79}{240}.
\end{equation*}%
And then,%
\begin{equation*}
M\left( x,x+1\right) =\left( x+\frac{1}{2}\right) \frac{x^{2}+x+\frac{23}{80}%
}{x^{2}+x+\frac{79}{240}}.
\end{equation*}%
It is easy to check that $S_{^{p;q}}^{3,2}\left( a,b\right) $ is a symmetric
and homogeneous mean of positive numbers $a$ and $b$ for $p=23/160$, $%
q=79/240$. Hence, by Theorem \ref{MT-p2><p3}, we have the optimal asymptotic
formula for the gamma function%
\begin{equation*}
\ln \Gamma (x+1)\thicksim \frac{1}{2}\ln 2\pi +\left( x+\frac{1}{2}\right)
\ln \tfrac{\left( x+1/2\right) \left( x^{2}+x+23/80\right) }{x^{2}+x+79/240}%
-\left( x+\frac{1}{2}\right) ,
\end{equation*}%
as $x\rightarrow \infty $, and%
\begin{equation*}
\lim_{x\rightarrow \infty }\tfrac{\ln \Gamma (x+1)-\ln \sqrt{2\pi }-\left(
x+1/2\right) \ln \tfrac{\left( x+1/2\right) \left( x^{2}+x+23/80\right) }{%
x^{2}+x+79/240}+x+1/2}{x^{-5}}=-\tfrac{18\,029}{29\,030\,400}.
\end{equation*}
\end{example}

Also, this asymptotic formula have a well property.

\begin{proposition}
For $x>-1/2$, the function $f_{3}$ defined by%
\begin{equation}
f_{3}\left( x\right) =\ln \Gamma (x+1)-\frac{1}{2}\ln 2\pi -\left( x+\frac{1%
}{2}\right) \ln \tfrac{\left( x+1/2\right) \left( x^{2}+x+23/80\right) }{%
x^{2}+x+79/240}+\left( x+\frac{1}{2}\right) .  \label{f3}
\end{equation}%
is increasing and concave.
\end{proposition}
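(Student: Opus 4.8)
The plan is to deduce both assertions from the single inequality $f_3''(x)\le 0$ on $\left(-1/2,\infty\right)$. Differentiating \eqref{f3} and using $2x+1=2\left(x+\tfrac12\right)$ together with $\frac{1}{x^2+x+23/80}-\frac{1}{x^2+x+79/240}=\frac{1/24}{\left(x^2+x+23/80\right)\left(x^2+x+79/240\right)}$, one obtains
\[
f_3'(x)=\psi(x+1)-\ln\frac{\left(x+\tfrac12\right)\left(x^2+x+\tfrac{23}{80}\right)}{x^2+x+\tfrac{79}{240}}-\frac{\left(x+\tfrac12\right)^2}{12\left(x^2+x+\tfrac{23}{80}\right)\left(x^2+x+\tfrac{79}{240}\right)}.
\]
Since the logarithm equals $\ln\left(x+\tfrac12\right)+o(1)$ and $\psi(x+1)-\ln\left(x+\tfrac12\right)\to0$, we have $\lim_{x\to\infty}f_3'(x)=0$. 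Hence, once $f_3''\le0$ is known, $f_3'$ is nonincreasing on $\left(-1/2,\infty\right)$ with limit $0$, so $f_3'\ge0$ there (i.e. $f_3$ is increasing), while $f_3''\le0$ is concavity. So it suffices to prove $f_3''(x)\le0$ for $x>-1/2$.

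Next I would put $y=x+\tfrac12$, note $x^2+x+\tfrac{23}{80}=y^2+\tfrac{3}{80}$ and $x^2+x+\tfrac{79}{240}=y^2+\tfrac{19}{240}$, and differentiate once more; using the partial-fraction identity $\frac{y^2}{\left(y^2+3/80\right)\left(y^2+19/240\right)}=\frac{-9/10}{y^2+3/80}+\frac{19/10}{y^2+19/240}$, this yields the closed form $f_3''(x)=\psi'(x+1)-G''(x)$ with
\[
G''(x)=\frac1y+\frac{2y}{y^2+\tfrac{3}{80}}-\frac{2y}{y^2+\tfrac{19}{240}}+\frac{3y}{20\left(y^2+\tfrac{3}{80}\right)^2}-\frac{19y}{60\left(y^2+\tfrac{19}{240}\right)^2}.
\]
Thus everything rests on the inequality $\psi'(x+1)\le G''(x)$ for $x>-1/2$.

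I see two routes for this step. The first follows the proofs of the Propositions for $f_1$ and $f_2$: writing $\psi'(x+1)=\int_0^\infty\frac{t\,e^{-yt}}{2\sinh(t/2)}\,dt$ and using the Laplace identities $\frac1y=\int_0^\infty e^{-yt}\,dt$, $\frac{y}{y^2+c^2}=\int_0^\infty e^{-yt}\cos(ct)\,dt$ and $\frac{y}{\left(y^2+c^2\right)^2}=\frac1{2c}\int_0^\infty e^{-yt}\,t\sin(ct)\,dt$, one gets $G''(x)-\psi'(x+1)=\int_0^\infty e^{-yt}\,\Phi(t)\,dt$, where $\Phi$ is an explicit elementary combination of $1,\ \cos(\alpha t),\ \cos(\beta t),\ t\sin(\alpha t),\ t\sin(\beta t)$ and $\tfrac{t}{2\sinh(t/2)}$ with $\alpha=\sqrt{3/80}$ and $\beta=\sqrt{19/240}$; a power-series check gives $\Phi(t)=O(t^{6})$ near $0$. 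The second route compares $\psi'(x+1)$ and $G''(x)$ with rational functions directly: their asymptotic expansions in powers of $1/y$ coincide up to and including the $y^{-5}$ term ($\tfrac1y-\tfrac1{12y^3}+\tfrac{7}{240y^5}$) and first disagree at order $y^{-7}$, where the coefficient of $G''$ is the larger, so for $y$ bounded below it suffices to trap both functions between two consecutive truncations of the Bernoulli envelope of $\psi'$; the resulting inequality for $G''$ is rational and, after clearing the positive denominator $y^{k}\left(y^2+\tfrac{3}{80}\right)^2\left(y^2+\tfrac{19}{240}\right)^2$, reduces to nonnegativity of an explicit even polynomial on $(0,\infty)$. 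The neighbourhood of the endpoint $x=-\tfrac12$ is disposed of at once: there $G''(x)\to+\infty$ while $\psi'(x+1)\to\psi'\!\left(\tfrac12\right)=\tfrac{\pi^2}{2}$, forcing $f_3''(x)\to-\infty$.

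The main obstacle is the same in either route: $G''$ was built (in Example~\ref{E-M3,2}) to agree with $\psi'(x+1)$ through order $\left(x+\tfrac12\right)^{-5}$, so $G''(x)-\psi'(x+1)$ is only of order $\left(x+\tfrac12\right)^{-7}$, and its sign must be pinned down precisely at that order. In the integral route this shows up as the kernel $\Phi$ not being of one sign — the summands $\tfrac{3}{40\alpha}t\sin(\alpha t)$ and $-\tfrac{19}{120\beta}t\sin(\beta t)$ oscillate with linearly growing amplitude — so the ``nonnegative integrand'' argument used for $f_1$ is unavailable, and one must split $(0,\infty)$, estimating the smooth part from below and the oscillatory tail from above. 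In the rational route the delicate point is that the Bernoulli truncations blow up as $y\to0^+$, so the polynomial estimates hold only away from $0$ and one needs the easy endpoint argument above together with a finite-interval verification on an intermediate compact $y$-range. Either route should close the proof; I would attempt the rational one first, the bulk of the work there being the sharp two-sided estimates for $\psi'(x+1)$ and the polynomial positivity check for $G''$.
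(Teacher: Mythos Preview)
Your reduction to $f_3''\le 0$ on $(-1/2,\infty)$ and the deduction of increasingness from it are correct, and they match the paper's strategy. The gap is in how you attack $\psi'(x+1)\le G''(x)$: both of your routes try to compare $\psi'$ with the rational function $G''$ \emph{directly}, and you rightly identify that this is delicate because the two agree through order $y^{-5}$. Neither route is actually carried out; the integral route is, as you note, essentially blocked by the oscillating kernel, and the rational route leaves an unspecified ``finite-interval verification'' involving $\psi'$ that is not obviously easier than the original problem.

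The paper sidesteps this entirely with a telescoping argument. Rather than bounding $f_3''(x)$, it computes the forward difference
\[
f_3''\!\left(t+\tfrac12\right)-f_3''\!\left(t-\tfrac12\right)=f_3''(x+1)-f_3''(x),\qquad t=x+\tfrac12.
\]
The key point is that the recursion $\psi'(x+2)-\psi'(x+1)=-\,(x+1)^{-2}$ makes the transcendental part drop out, so this difference is a purely \emph{rational} function of $t$. After clearing the manifestly positive denominator one is left with a single explicit polynomial $f_{31}(t)$ of degree $12$, all of whose coefficients are positive; hence $f_3''(x+1)>f_3''(x)$ for $x>-1/2$. The chain
\[
f_3''(x)<f_3''(x+1)<f_3''(x+2)<\cdots
\]
together with $\lim_{x\to\infty}f_3''(x)=0$ then forces $f_3''(x)<0$ throughout.

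This is precisely the idea missing from your proposal: it converts a sharp transcendental inequality---hard exactly because the two sides agree to high order---into a one-line polynomial positivity check. The same device is used for $f_4$, $f_5$, $f_6$, $f_7$ later in the paper, so it is worth adding to your toolkit.
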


\begin{proof}
Differentiation gives%
\begin{eqnarray*}
f_{3}^{\prime }\left( x\right) &=&\psi \left( x+1\right) +\ln \left( x^{2}+x+%
\frac{79}{240}\right) -\ln \left( x^{2}+x+\frac{23}{80}\right) \\
&&-\ln \left( x+\frac{1}{2}\right) -2\frac{\left( x+1/2\right) ^{2}}{%
x^{2}+x+23/80}+2\frac{\left( x+1/2\right) ^{2}}{x^{2}+x+79/240},
\end{eqnarray*}%
\begin{eqnarray*}
f_{3}^{\prime \prime }\left( x\right) &=&\psi ^{\prime }\left( x+1\right) +6%
\frac{x+1/2}{x^{2}+x+79/240}-6\frac{x+1/2}{x^{2}+x+23/80} \\
&&-\frac{1}{x+1/2}+4\frac{\left( x+1/2\right) ^{3}}{\left(
x^{2}+x+23/80\right) ^{2}}-4\frac{\left( x+1/2\right) ^{3}}{\left(
x^{2}+x+79/240\right) ^{2}}.
\end{eqnarray*}%
Denote by $x+1/2=t$ and make use of recursive relation%
\begin{equation}
\psi ^{\left( n\right) }(x+1)-\psi ^{\left( n\right) }(x)=\left( -1\right)
^{n}\frac{n!}{x^{n+1}}  \label{psi-rel.}
\end{equation}%
yield%
\begin{eqnarray*}
&&f_{3}^{\prime \prime }(t+\frac{1}{2})-f_{3}^{\prime \prime }(t-\frac{1}{2})
\\
&=&-\tfrac{1}{\left( t+1/2\right) ^{2}}+6\tfrac{t+1}{\left( t+1\right)
^{2}+19/240}-6\tfrac{t+1}{\left( t+1\right) ^{2}+3/80}-\frac{1}{\left(
t+1\right) }+4\tfrac{\left( t+1\right) ^{3}}{\left( \left( t+1\right)
^{2}+3/80\right) ^{2}} \\
&&-4\tfrac{\left( t+1\right) ^{3}}{\left( \left( t+1\right)
^{2}+19/240\right) ^{2}}-\left( 6\tfrac{t}{t^{2}+19/240}-6\tfrac{t}{%
t^{2}+3/80}-\frac{1}{t}+4\tfrac{t^{3}}{\left( t^{2}+3/80\right) ^{2}}-4%
\tfrac{t^{3}}{\left( t^{2}+19/240\right) ^{2}}\right) \\
&=&\frac{f_{31}\left( t\right) }{t\left( t+1\right) \left( t+\frac{1}{2}%
\right) ^{2}\left( t^{2}+2t+83/80\right) ^{2}\left( t^{2}+3/80\right)
^{2}\left( t^{2}+2t+259/240\right) ^{2}\left( t^{2}+19/240\right) ^{2}},
\end{eqnarray*}%
where%
\begin{eqnarray*}
f_{31}\left( t\right) &=&\tfrac{18\,029}{138\,240}t^{12}+\tfrac{18\,029}{%
23\,040}t^{11}+\tfrac{83\,674\,657}{41\,472\,000}t^{10}+\tfrac{24\,178\,957}{%
8294\,400}t^{9}+\tfrac{34\,366\,211\,867}{13\,271\,040\,000}t^{8}+\tfrac{%
4894\,651\,067}{3317\,760\,000}t^{7} \\
&&+\tfrac{74\,296\,657\,243}{132\,710\,400\,000}t^{6}+\tfrac{%
20\,147\,292\,749}{132\,710\,400\,000}t^{5}+\tfrac{297\,092\,035\,417}{%
9437\,184\,000\,000}t^{4}+\tfrac{66\,777\,391\,051}{14\,155\,776\,000\,000}%
t^{3} \\
&&+\tfrac{295\,012\,866\,563}{566\,231\,040\,000\,000}t^{2}+\tfrac{%
3972\,595\,981}{188\,743\,680\,000\,000}t+\tfrac{166\,825\,684\,249}{%
60\,397\,977\,600\,000\,000} \\
&>&0\text{ for }t=x+1/2>0\text{.}
\end{eqnarray*}

This shows that $f_{3}^{\prime \prime }(t+\frac{1}{2})-f_{3}^{\prime \prime
}(t-\frac{1}{2})>0$, that is, $f_{3}^{\prime \prime }(x+1)-f_{3}^{\prime
\prime }(x)>0$, and so%
\begin{equation*}
f_{3}^{\prime \prime }(x)<f_{3}^{\prime \prime }(x+1)<f_{3}^{\prime \prime
}(x+2)<...<f_{3}^{\prime \prime }(\infty )=0.
\end{equation*}%
It reveals that shows $f_{3}$ is concave on $\left( -1/2,\infty \right) $,
and we conclude that, $f_{3}^{\prime }(x)>\lim_{x\rightarrow \infty
}f_{3}^{\prime }(x)=0$, which proves the desired result.
\end{proof}

As a consequence of the above proposition, we have

\begin{corollary}
For $x>0$, the double inequality%
\begin{equation*}
\sqrt{\tfrac{158e}{69}}\left( \tfrac{x+1/2}{e}\tfrac{x^{2}+x+23/80}{%
x^{2}+x+79/240}\right) ^{x+1/2}<\Gamma (x+1)<\sqrt{2\pi }\left( \tfrac{x+1/2%
}{e}\tfrac{x^{2}+x+23/80}{x^{2}+x+79/240}\right) ^{x+1/2}
\end{equation*}%
holds true, where $\sqrt{158e/69}\approx 2.4949$ and and $\sqrt{2\pi }%
\approx 2.5066$ are the best.

For $n\in \mathbb{N}$, it is true that%
\begin{equation*}
\left( \tfrac{1118e}{1647}\right) ^{3/2}\left( \tfrac{n+1/2}{e}\tfrac{%
n^{2}+n+23/80}{n^{2}+n+79/240}\right) ^{n+1/2}<n!<\sqrt{2\pi }\left( \tfrac{%
n+1/2}{e}\tfrac{n^{2}+n+23/80}{n^{2}+n+79/240}\right) ^{n+1/2}
\end{equation*}%
holds true with the best constants $\left( 1118e/1647\right) ^{3/2}\approx
2.5065$ and $\sqrt{2\pi }\approx 2.5066$.
\end{corollary}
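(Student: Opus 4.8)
The idea is to extract both double inequalities from the monotonicity of $f_{3}$ proved in the preceding proposition, exactly as the corollaries following $f_{1}$ and $f_{2}$ were obtained. The two facts I would put to work are: $\lim_{x\to\infty}f_{3}(x)=0$, which is just a restatement of the asymptotic formula in Example \ref{E-M3,2}; and that $f_{3}$ is increasing on $(-1/2,\infty)$. Combining them, $f_{3}(x)<\lim_{y\to\infty}f_{3}(y)=0$ for every $x>-1/2$. Writing the mean of that example in the explicit form $M(x,x+1)=(x+\tfrac12)\dfrac{x^{2}+x+23/80}{x^{2}+x+79/240}$ and unwinding the definition $(\ref{f3})$, the inequality $f_{3}(x)<0$ is, after exponentiation, precisely
\begin{equation*}
\Gamma(x+1)<\sqrt{2\pi}\left(\frac{M(x,x+1)}{e}\right)^{x+1/2}=\sqrt{2\pi}\left(\frac{x+1/2}{e}\,\frac{x^{2}+x+23/80}{x^{2}+x+79/240}\right)^{x+1/2},
\end{equation*}
which is the upper bound in the first display and, taking $x=n$, also in the second.

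For the lower bounds I would again invoke monotonicity: $f_{3}(x)>f_{3}(0)$ for $x>0$, and $f_{3}(n)\ge f_{3}(1)$ for $n\in\mathbb{N}$ (equality at $n=1$). It then only remains to evaluate $f_{3}$ at the two endpoints. Since $M(0,1)=\tfrac12\cdot\tfrac{23/80}{79/240}=\tfrac{69}{158}$ and $M(1,2)=\tfrac32\cdot\tfrac{183/80}{559/240}=\tfrac{1647}{1118}$, and $\Gamma(1)=\Gamma(2)=1$, one gets
\begin{equation*}
f_{3}(0)=\tfrac12\ln\frac{79e}{69\pi},\qquad f_{3}(1)=\ln\!\left(\frac{1118e}{1647}\right)^{3/2}-\tfrac12\ln 2\pi ,
\end{equation*}
so that $e^{f_{3}(0)}\sqrt{2\pi}=\sqrt{158e/69}$ and $e^{f_{3}(1)}\sqrt{2\pi}=(1118e/1647)^{3/2}$. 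Exponentiating $f_{3}(x)>f_{3}(0)$ and $f_{3}(n)\ge f_{3}(1)$ and multiplying through by $\sqrt{2\pi}$ then yields the two lower bounds.

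The sharpness of the four constants comes for free from the same monotonicity: for the upper bounds $f_{3}$ increases to $0$, so $\sup_{x>0}e^{f_{3}(x)}=1=\sup_{n}e^{f_{3}(n)}$ and no constant below $\sqrt{2\pi}$ works; for the lower bounds $\inf_{x>0}e^{f_{3}(x)}=e^{f_{3}(0^{+})}$ is approached as $x\to0^{+}$ and $\inf_{n}e^{f_{3}(n)}=e^{f_{3}(1)}$ is attained at $n=1$, so $\sqrt{158e/69}$ and $(1118e/1647)^{3/2}$ cannot be increased. I do not anticipate a genuine obstacle here: the whole argument is the monotonicity of $f_{3}$ together with three elementary fraction simplifications, and the only points that need care are keeping the normalisation of the mean $M$ straight and quoting $f_{3}(\infty)=0$ with the sign that makes the inequalities point the correct way.
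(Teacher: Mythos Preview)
Your argument is correct and matches the paper's intended approach exactly: the paper states the corollary immediately after the proposition on $f_{3}$ with only the words ``As a consequence of the above proposition, we have'', so the whole content is precisely the monotonicity of $f_{3}$ together with the endpoint evaluations $f_{3}(0)$, $f_{3}(1)$ and $f_{3}(\infty)=0$ that you worked out. Your fraction computations for $M(0,1)=69/158$ and $M(1,2)=1647/1118$ and the resulting values $e^{f_{3}(0)}\sqrt{2\pi}=\sqrt{158e/69}$, $e^{f_{3}(1)}\sqrt{2\pi}=(1118e/1647)^{3/2}$ are all correct.
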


\begin{example}
\label{E-N3,2}Let%
\begin{eqnarray*}
K\left( a,b\right) &=&M\left( a,b\right) =A\left( a,b\right) =\frac{a+b}{2},
\\
N\left( a,b\right) &=&S_{^{p;q}}^{3,2}\left( a,b\right) =\frac{%
pa^{3}+pb^{3}+\left( 1/2-p\right) ab^{2}+\left( 1/2-p\right) a^{2}b}{%
qa^{2}+qb^{2}+\left( 1-2q\right) ab} \\
&=&\frac{a+b}{2}\frac{2pa^{2}+2pb^{2}+\left( 1-4p\right) ab}{%
qa^{2}+qb^{2}+\left( 1-2q\right) ab}
\end{eqnarray*}%
and $\sigma =0$ in Theorem \ref{MT-p2><p3}, where $p$ and $q$ are parameters
to be determined. Direct computations give%
\begin{eqnarray*}
\lim_{x\rightarrow \infty }\tfrac{\ln \Gamma (x+1)-\frac{1}{2}\ln 2\pi
-\left( x+1/2\right) \ln \left( x+1/2\right) +\left( x+1/2\right) \frac{%
x^{2}+x+2p}{x^{2}+x+q}}{x^{-1}} &=&2p-q-\frac{1}{24}, \\
\lim_{x\rightarrow \infty }\tfrac{\ln \Gamma (x+1)-\frac{1}{2}\ln 2\pi
-\left( x+1/2\right) \ln \left( x+1/2\right) +\left( x+1/2\right) \frac{%
x^{2}+x+2p}{x^{2}+x+2p-1/24}}{x^{-3}} &=&\frac{7}{480}-\frac{1}{12}p.
\end{eqnarray*}%
Solving the simultaneous equations%
\begin{eqnarray*}
2p-q-\frac{1}{24} &=&0, \\
\frac{7}{480}-\frac{1}{12}p &=&0
\end{eqnarray*}%
leads to $p=7/40$, $q=37/120$. And then,%
\begin{equation*}
N\left( x,x+1\right) =\left( x+1/2\right) \frac{x^{2}+x+7/20}{x^{2}+x+37/120}%
.
\end{equation*}

An easy verification shows that $S_{^{p;q}}^{3,2}\left( a,b\right) $ is a
symmetric and homogeneous mean of positive numbers $a$ and $b$ for $p=7/40$, 
$q=37/120$. Hence, by Theorem \ref{MT-p2><p3} we get the best asymptotic
formula for the gamma function%
\begin{equation*}
\ln \Gamma (x+1)\thicksim \frac{1}{2}\ln 2\pi +\left( x+\frac{1}{2}\right)
\ln \left( x+\frac{1}{2}\right) -\left( x+\frac{1}{2}\right) \frac{%
x^{2}+x+7/20}{x^{2}+x+37/120},
\end{equation*}%
as $x\rightarrow \infty $. And we have%
\begin{equation*}
\lim_{x\rightarrow \infty }\tfrac{\ln \Gamma (x+1)-\frac{1}{2}\ln 2\pi
-\left( x+1/2\right) \ln \left( x+1/2\right) +\left( x+1/2\right) \frac{%
x^{2}+x+7/20}{x^{2}+x+37/120}}{x^{-5}}=-\frac{1517}{2419\,200}.
\end{equation*}
\end{example}

Now we prove the following assertion related to this asymptotic formula.

\begin{proposition}
Let the function $f_{4}$ be defined on $\left( -1/2,\infty \right) $ by%
\begin{equation*}
f_{4}(x)=\ln \Gamma (x+1)-\tfrac{1}{2}\ln 2\pi -\left( x+\tfrac{1}{2}\right)
\ln (x+\tfrac{1}{2})+\left( x+\tfrac{1}{2}\right) \frac{x^{2}+x+7/20}{%
x^{2}+x+37/120}.
\end{equation*}%
Then $f_{4}$ is increasing and convex on $\left( -1/2,\infty \right) $.
\end{proposition}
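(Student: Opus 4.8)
The plan is to follow the pattern of the preceding proposition for $f_{3}$. First, differentiate $f_{4}$ twice. Using $\frac{d}{dx}\ln\Gamma(x+1)=\psi(x+1)$ and $\frac{d}{dx}\big[(x+\tfrac12)\ln(x+\tfrac12)\big]=\ln(x+\tfrac12)+1$, and writing $x^{2}+x=(x+\tfrac12)^{2}-\tfrac14$ so that
\[
\frac{x^{2}+x+7/20}{x^{2}+x+37/120}=1+\frac{1/24}{(x+1/2)^{2}+7/120},
\]
one obtains $f_{4}'(x)=\psi(x+1)-\ln(x+\tfrac12)+R_{1}(x)$ and then $f_{4}''(x)=\psi'(x+1)-\frac{1}{x+1/2}+R_{2}(x)$, where $R_{1},R_{2}$ are explicit rational functions; with $u=x+\tfrac12$ one has $R_{1}(x)=\frac{7/120-u^{2}}{24(u^{2}+7/120)^{2}}$ and $R_{2}=R_{1}'$ is a constant multiple of $u(u^{2}-21/120)/(u^{2}+7/120)^{3}$.

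Second, to determine the sign of $f_{4}''$ I would pass to the unit step difference. Put $t=x+\tfrac12$, which ranges over $(0,\infty)$ as $x$ ranges over $(-\tfrac12,\infty)$, and use $\psi'(y+1)-\psi'(y)=-1/y^{2}$ (the $n=1$ case of the recursion $\psi^{(n)}(x+1)-\psi^{(n)}(x)=(-1)^{n}n!/x^{n+1}$) to cancel the polygamma terms:
\[
f_{4}''\big(t+\tfrac12\big)-f_{4}''\big(t-\tfrac12\big)=-\frac{1}{(t+1/2)^{2}}+\frac{1}{t(t+1)}+\big(R_{2}(t+\tfrac12)-R_{2}(t-\tfrac12)\big).
\]
Bringing the right side over its common denominator — a product of $t$, $t+1$, $(t+\tfrac12)^{2}$ and the two quadratics $t^{2}+7/120$ and $(t+1)^{2}+7/120$, each to a suitable power, hence strictly positive for $t>0$ — reduces everything to the sign of the numerator, a polynomial $F(t)$ with explicit rational coefficients, playing exactly the role of $f_{31}$ in the previous proof. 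I expect $F$ to have all its coefficients of one sign, so that $f_{4}''(x+1)-f_{4}''(x)$ keeps a constant sign on $(-\tfrac12,\infty)$.

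Third, iterate: $f_{4}''(x)$ is then monotone along $x,x+1,x+2,\dots$, and since $f_{4}''(x)\to0$ as $x\to\infty$ (differentiate the expansion in Example \ref{E-N3,2} twice, or invoke $\psi'(x)\sim1/x+1/(2x^{2})+\cdots$), the sign of $f_{4}''$ on all of $(-\tfrac12,\infty)$ is pinned down. With that sign known, $f_{4}'$ is monotone; computing $\lim_{x\to\infty}f_{4}'(x)=0$ — which follows from Example \ref{E-N3,2}, or directly from $\psi(x+1)-\ln(x+\tfrac12)=\frac{1}{24(x+1/2)^{2}}+O\big((x+1/2)^{-3}\big)$ cancelling the leading term $-1/(24(x+1/2)^{2})$ of $R_{1}$ — then fixes the sign of $f_{4}'$. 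Together these yield the monotonicity and convexity of $f_{4}$ asserted in the statement.

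The step I expect to be the real work is the verification in the second paragraph that the numerator polynomial $F(t)$ has all its coefficients of one sign for $t>0$: this is a sizeable but entirely mechanical computation, of the same nature as the explicit display of $f_{31}(t)$ in the previous proposition, and the one thing to watch is the bookkeeping of the denominators evaluated at the shifted arguments $t-\tfrac12$ and $t+\tfrac12$, so that the least common denominator is correctly recognised as a product of factors manifestly positive on $(0,\infty)$.
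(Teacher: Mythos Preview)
Your plan is exactly the paper's approach: compute $f_{4}''$, form the unit-step difference $f_{4}''(t+\tfrac12)-f_{4}''(t-\tfrac12)$ via the recursion for $\psi'$, reduce to a rational function with manifestly positive denominator $t(t+1)(t+\tfrac12)^{2}(t^{2}+7/120)^{3}((t+1)^{2}+7/120)^{3}$ and a numerator polynomial $f_{41}(t)$ whose coefficients are all positive, then iterate and pass to the limit. One remark: carrying this out gives $f_{4}''(x)<f_{4}''(\infty)=0$, so the argument actually proves $f_{4}$ is increasing and \emph{concave} --- the paper's own proof says ``concave'' as well, and the word ``convex'' in the proposition is a typo.
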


\begin{proof}
Differentiation gives%
\begin{eqnarray*}
f_{4}^{\prime }(x) &=&\psi \left( x+1\right) -\ln \left( x+\frac{1}{2}%
\right) +\frac{1}{24}\frac{1}{x^{2}+x+37/120}-\frac{1}{12}\frac{\left(
x+1/2\right) ^{2}}{\left( x^{2}+x+37/120\right) ^{2}}, \\
f_{4}^{\prime \prime }(x) &=&\psi ^{\prime }\left( x+1\right) -\frac{1}{x+1/2%
}-\frac{1}{4}\frac{x+1/2}{\left( x^{2}+x+37/120\right) ^{2}}+\frac{1}{3}%
\frac{\left( x+\frac{1}{2}\right) ^{3}}{\left( x^{2}+x+37/120\right) ^{3}}.
\end{eqnarray*}%
Denote by $x+1/2=t$ and make use of recursive relation (\ref{psi-rel.}) yield%
\begin{eqnarray*}
&&f_{4}^{\prime \prime }(t+\frac{1}{2})-f_{4}^{\prime \prime }(t-\frac{1}{2})
\\
&=&-\tfrac{1}{\left( t+1/2\right) ^{2}}-\frac{1}{t+1}-\frac{1}{4}\frac{t+1}{%
\left( \left( t+1\right) ^{2}+7/120\right) ^{2}}+\frac{1}{3}\frac{\left(
t+1\right) ^{3}}{\left( \left( t+1\right) ^{2}+7/120\right) ^{3}} \\
&&-\left( -\frac{1}{t}-\frac{1}{4}\frac{t}{\left( t^{2}+7/120\right) ^{2}}+%
\frac{1}{3}\frac{t^{3}}{\left( t^{2}+7/120\right) ^{3}}\right) \\
&=&\frac{f_{41}\left( t\right) }{t\left( t+1\right) \left( t+1/2\right)
^{2}\left( t^{2}+7/120\right) ^{3}\left( t^{2}+2t+127/120\right) ^{3}},
\end{eqnarray*}%
where%
\begin{eqnarray*}
f_{41}\left( t\right) &=&\frac{1517}{11\,520}t^{8}+\frac{1517}{2880}t^{7}+%
\frac{161\,087}{192\,000}t^{6}+\frac{387\,883}{576\,000}t^{5}+\frac{%
39\,563\,149}{138\,240\,000}t^{4} \\
&&+\frac{4462\,549}{69\,120\,000}t^{3}+\frac{67\,788\,161}{8294\,400\,000}%
t^{2}+\frac{2794\,421}{8294\,400\,000}t+\frac{702\,595\,369}{%
11\,943\,936\,000\,000} \\
&>&0\text{ for }t=x+1/2>0.
\end{eqnarray*}%
This implies that $f_{4}^{\prime \prime }(t+\frac{1}{2})-f_{4}^{\prime
\prime }(t-\frac{1}{2})>0$, that is, $f_{4}^{\prime \prime
}(x+1)-f_{4}^{\prime \prime }(x)>0$, and so%
\begin{equation*}
f_{4}^{\prime \prime }(x)<f_{4}^{\prime \prime }(x+1)<f_{4}^{\prime \prime
}(x+2)<...<f_{4}^{\prime \prime }(\infty )=0.
\end{equation*}%
It reveals that shows $f_{4}$ is concave on $\left( -1/2,\infty \right) $,
and therefore, $f_{4}^{\prime }(x)>\lim_{x\rightarrow \infty }f_{4}^{\prime
}(x)=0$, which proves the desired result.
\end{proof}

By the increasing property of $f_{4}$ on $\left( -1/2,\infty \right) $ and
the facts%
\begin{equation*}
f_{4}\left( 0\right) =\ln \frac{e^{21/37}}{\sqrt{\pi }}\text{, \ }%
f_{4}\left( 1\right) =\ln \frac{2e^{423/277}}{3\sqrt{3\pi }}\text{, \ }%
f_{4}\left( \infty \right) =0,
\end{equation*}%
we have

\begin{corollary}
For $x>0$, the double inequality%
\begin{equation*}
e^{21/37}\sqrt{2}\left( \tfrac{x+1/2}{\exp \left( \frac{x^{2}+x+7/20}{%
x^{2}+x+37/120}\right) }\right) ^{x+1/2}<\Gamma (x+1)<\sqrt{2\pi }\left( 
\tfrac{x+1/2}{\exp \left( \frac{x^{2}+x+7/20}{x^{2}+x+37/120}\right) }%
\right) ^{x+1/2}
\end{equation*}%
holds, where $e^{21/37}\sqrt{2}\approx 2.4946$ and $\sqrt{2\pi }\approx
2.5066$ are the best.

For $n\in \mathbb{N}$, the double inequality%
\begin{equation*}
e^{423/277}\tfrac{2\sqrt{2}}{3\sqrt{3}}(\tfrac{n+1/2}{e})^{n+1/2}\exp \left(
-\tfrac{1}{24}\tfrac{n+1/2}{n^{2}+n+37/120}\right) <n!<\sqrt{2\pi }(\tfrac{%
n+1/2}{e})^{n+1/2}\exp \left( -\tfrac{1}{24}\tfrac{n+1/2}{n^{2}+n+37/120}%
\right)
\end{equation*}%
holds true with the best constants $2\sqrt{2}e^{423/277}/\left( 3\sqrt{3}%
\right) \approx 2.5065$ and $\sqrt{2\pi }\approx 2.5066$.
\end{corollary}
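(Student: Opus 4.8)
The plan is to obtain both double inequalities as direct consequences of the monotonicity of $f_4$ established in the preceding Proposition, once one adds the single extra fact that $\lim_{x\to\infty}f_4(x)=0$. This limit is precisely the content of Example~\ref{E-N3,2} (which is Theorem~\ref{MT-p2><p3} applied with $M=K=A$, $N=S^{3,2}_{7/40;37/120}$, $\sigma=0$, $r\equiv0$); alternatively it follows at once from the classical relation $\ln\Gamma(x+1)-(x+\tfrac12)\ln(x+\tfrac12)+(x+\tfrac12)\to\tfrac12\ln2\pi$ together with $(x+\tfrac12)\big(\tfrac{x^2+x+7/20}{x^2+x+37/120}-1\big)=\tfrac1{24}\cdot\tfrac{x+1/2}{x^2+x+37/120}\to0$. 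Granting this, the Proposition gives $f_4(x_0)\le f_4(x)<0$ for every fixed $x_0>-1/2$ and all $x\ge x_0$; unwinding the definition of $f_4$ and exponentiating converts this into
\[
e^{f_4(x_0)}\sqrt{2\pi}\left(\frac{x+1/2}{\exp\!\big(\tfrac{x^2+x+7/20}{x^2+x+37/120}\big)}\right)^{x+1/2}\le\Gamma(x+1)<\sqrt{2\pi}\left(\frac{x+1/2}{\exp\!\big(\tfrac{x^2+x+7/20}{x^2+x+37/120}\big)}\right)^{x+1/2},
\]
where I have used $(x+\tfrac12)^{x+1/2}\exp\!\big(-(x+\tfrac12)\tfrac{x^2+x+7/20}{x^2+x+37/120}\big)=\big(\tfrac{x+1/2}{\exp((x^2+x+7/20)/(x^2+x+37/120))}\big)^{x+1/2}$.

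For the first assertion I would take $x_0=0$ and plug in the value $f_4(0)=\ln\big(e^{21/37}/\sqrt\pi\big)$ recorded after the Proposition, so that $e^{f_4(0)}\sqrt{2\pi}=e^{21/37}\sqrt2$; since $f_4$ is strictly increasing and its domain of interest is $x>0$, the inequality $f_4(0)<f_4(x)<0$ is strict, which is exactly the stated strict double inequality. For the factorial version I would restrict to integers $x=n\ge1$, take $x_0=1$, and use $f_4(1)=\ln\big(2e^{423/277}/(3\sqrt{3\pi})\big)$, so that $e^{f_4(1)}\sqrt{2\pi}=2\sqrt2\,e^{423/277}/(3\sqrt3)$; here the only extra manipulation is the algebraic identity $\tfrac{x^2+x+7/20}{x^2+x+37/120}=1+\tfrac1{24}\cdot\tfrac1{x^2+x+37/120}$, which rewrites the middle expression evaluated at $n$ in the displayed form $\big(\tfrac{n+1/2}{e}\big)^{n+1/2}\exp\!\big(-\tfrac1{24}\tfrac{n+1/2}{n^2+n+37/120}\big)$.

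For optimality of the four constants, observe that $f_4$ is continuous and increasing with $f_4(x)\to f_4(0)$ as $x\downarrow0$ and $f_4(x)\to0$ as $x\to\infty$, so the quotient of $\Gamma(x+1)$ by the middle expression sweeps out exactly the interval $(e^{f_4(0)},1)$ and nothing outside it; hence $e^{21/37}\sqrt2=e^{f_4(0)}\sqrt{2\pi}$ is the largest admissible lower constant and $\sqrt{2\pi}=e^{f_4(\infty)}\sqrt{2\pi}$ the smallest admissible upper one, and the same comparison along $n=1,2,\dots$ yields the sharpness of $2\sqrt2\,e^{423/277}/(3\sqrt3)$ and $\sqrt{2\pi}$. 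The only part that needs care rather than a new idea is the closed-form evaluation of $f_4(0)$ and $f_4(1)$ (routine substitution into the definition of $f_4$, using $\Gamma(1)=\Gamma(2)=1$, $\ln\tfrac12=-\ln2$ and $\ln\tfrac32=\ln3-\ln2$) and confirming that the limit of $f_4$ at infinity is genuinely $0$; I anticipate no substantial obstacle beyond this.
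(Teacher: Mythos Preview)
Your proposal is correct and follows essentially the same approach as the paper: the paper's entire argument is the sentence preceding the corollary, namely that one combines the increasing property of $f_{4}$ with the three values $f_{4}(0)=\ln\big(e^{21/37}/\sqrt{\pi}\big)$, $f_{4}(1)=\ln\big(2e^{423/277}/(3\sqrt{3\pi})\big)$, and $f_{4}(\infty)=0$. You have supplied exactly this argument with the details filled in (the justification of $f_{4}(\infty)=0$, the algebraic rewriting via $\tfrac{x^{2}+x+7/20}{x^{2}+x+37/120}=1+\tfrac{1}{24}\cdot\tfrac{1}{x^{2}+x+37/120}$, and the optimality discussion), none of which the paper spells out.
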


\begin{example}
\label{E-N4,3}Let%
\begin{eqnarray*}
K\left( a,b\right) &=&M\left( a,b\right) =A\left( a,b\right) =x+1/2, \\
N\left( a,b\right) &=&S_{^{p,q;r}}^{4,3}\left( a,b\right) =\frac{%
pa^{4}+pb^{4}+qa^{3}b+qab^{3}+\left( 1-2p-2q\right) a^{2}b^{2}}{%
ra^{3}+rb^{3}+\left( 1/2-r\right) a^{2}b+\left( 1/2-r\right) ab^{2}}
\end{eqnarray*}%
and $\sigma =0$ in Theorem \ref{MT-p2><p3}. In a similar way, we can
determine that the best parameters satisfy 
\begin{equation*}
r=2p+\frac{1}{2}q-\frac{7}{48}\text{, \ }p=\frac{21}{40}-\frac{7}{4}q\text{,
\ }q=\frac{7303}{35\,280},
\end{equation*}%
which imply%
\begin{equation*}
p=\frac{3281}{20\,160},q=\frac{7303}{35\,280};r=\frac{111}{392}.
\end{equation*}%
Then,%
\begin{equation}
N\left( x,x+1\right) =x+\tfrac{1}{2}+\tfrac{1517}{44\,640}\tfrac{1}{x+1/2}+%
\tfrac{343}{44\,640}\tfrac{x+1/2}{x^{2}+x+111/196}:=N_{4/3}\left(
x,x+1\right) ,  \label{N4/3}
\end{equation}%
In this case, we easily check that $S_{^{p,q;r}}^{4,3}\left( a,b\right) $ is
a mean of $a$ and $b$. Consequently, from Theorem \ref{MT-p2><p3} the
following best asymptotic formula for the gamma function%
\begin{equation*}
\ln \Gamma (x+1)\sim \frac{1}{2}\ln 2\pi +\left( x+1/2\right) \ln
(x+1/2)-N_{4/3}\left( x,x+1\right)
\end{equation*}%
holds true as $x\rightarrow \infty $. And, we have%
\begin{equation*}
\lim_{x\rightarrow \infty }\tfrac{\ln \Gamma (x+1)-\frac{1}{2}\ln 2\pi
-\left( x+1/2\right) \ln \left( x+1/2\right) +N_{4/3}\left( x,x+1\right) }{%
x^{-7}}=\tfrac{10\,981}{31\,610\,880}.
\end{equation*}
\end{example}

We now present the monotonicity and convexity involving this asymptotic
formula.

\begin{proposition}
Let $f_{5}$ defined on $\left( -1/2,\infty \right) $ by%
\begin{equation*}
f_{5}(x)=\ln \Gamma (x+1)-\frac{1}{2}\ln 2\pi -\left( x+1/2\right) \ln
(x+1/2)+N_{4/3}\left( x,x+1\right) ,
\end{equation*}%
where $N_{4/3}\left( x,x+1\right) $ is defined\ by (\ref{N4/3}). Then $f_{5}$
is decreasing and convex on $\left( -1/2,\infty \right) $.
\end{proposition}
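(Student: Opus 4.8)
The plan is to imitate exactly the scheme used in the proofs of the propositions for $f_3$ and $f_4$: differentiate $f_5$ twice, reduce the sign of $f_5''$ to a polynomial positivity statement via the shift $x+1/2=t$ and the recurrence (\ref{psi-rel.}), and then read off monotonicity of $f_5$ from convexity. Concretely, using
\[
N_{4/3}(x,x+1)=x+\tfrac12+\tfrac{1517}{44\,640}\tfrac{1}{x+1/2}+\tfrac{343}{44\,640}\tfrac{x+1/2}{x^2+x+111/196},
\]
I would compute
\[
f_5'(x)=\psi(x+1)-\ln\!\left(x+\tfrac12\right)+R_1(x),\qquad f_5''(x)=\psi'(x+1)-\frac{1}{x+1/2}+R_2(x),
\]
where $R_1,R_2$ are rational functions obtained by differentiating the correction term $N_{4/3}$; their denominators are powers of $x+1/2$ and of $x^2+x+111/196=(x+1/2)^2+111/196-1/4$, so $R_1(x),R_2(x)\to 0$ as $x\to\infty$.

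Next, setting $t=x+1/2$ and applying the recurrence $\psi'(x+1)-\psi'(x)=1/x^2$ from (\ref{psi-rel.}), I would form the difference $f_5''(t+\tfrac12)-f_5''(t-\tfrac12)$. The two $\psi'$ contributions combine with the $-1/(t\pm\tfrac12)^2$ terms so that the transcendental part disappears entirely, leaving a pure rational function of $t$. Putting it over the common denominator — a product of manifestly positive factors $t$, $t+1$, $(t+1/2)^2$ and appropriate powers of quadratics of the shape $t^2+(\text{positive constant})$ arising from $(x+1/2)^2+c$ evaluated at $x=t-1/2$ and $x=t+1/2$ — I would be left with a single numerator polynomial $f_{51}(t)$. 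Since every denominator factor is positive for $t>0$, the assertion to verify is $f_{51}(t)<0$ for $t>0$, which I expect to hold because $-f_{51}$ turns out to have all nonnegative coefficients (exactly as with $f_{31}$ and $f_{41}$ above).

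Granting $f_5''(t+\tfrac12)-f_5''(t-\tfrac12)<0$, i.e. $f_5''(x+1)<f_5''(x)$, I obtain the decreasing chain $f_5''(x)>f_5''(x+1)>\cdots>f_5''(\infty)=0$, where the limit is $0$ by the standard asymptotics $\psi'(x+1)\sim 1/x$ together with $R_2(\infty)=0$. Hence $f_5''>0$ on $(-1/2,\infty)$, so $f_5$ is convex; consequently $f_5'$ is increasing, and since $f_5'(\infty)=0$ (by $\psi(x+1)\sim\ln x$ and $R_1(\infty)=0$, or equivalently from the asymptotic formula established in Example~\ref{E-N4,3}), we get $f_5'(x)<0$, so $f_5$ is decreasing. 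This is precisely the claimed behaviour.

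The main obstacle is purely computational: because $N_{4/3}$ has a quartic-over-cubic form, $f_5''$ carries denominators up to $(x^2+x+111/196)^3$, so clearing denominators in $f_5''(t+\tfrac12)-f_5''(t-\tfrac12)$ produces a polynomial $f_{51}$ of fairly large degree (in the mid-teens) with sizeable rational coefficients. The entire difficulty lies in carrying out this expansion accurately and checking the sign of every coefficient; there is no conceptual hurdle beyond that, since the monotone-difference device combined with (\ref{psi-rel.}) is the same one already used successfully for $f_3$ and $f_4$.
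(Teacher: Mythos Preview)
Your proposal is correct and follows exactly the paper's scheme: compute $f_5''$, use the substitution $t=x+1/2$ together with (\ref{psi-rel.}) to reduce $f_5''(t+\tfrac12)-f_5''(t-\tfrac12)$ to a rational function whose numerator polynomial has constant sign, then chain $f_5''(x)>f_5''(x+1)>\cdots>0$ and read off decreasingness from $f_5'(\infty)=0$. Two minor corrections: the recurrence for $n=1$ is $\psi'(x+1)-\psi'(x)=-1/x^2$ (not $+1/x^2$), and the numerator polynomial $f_{51}$ is degree $10$, not in the mid-teens.
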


\begin{proof}
Differentiation gives%
\begin{eqnarray*}
f_{5}^{\prime }(x) &=&\psi \left( x+1\right) -\ln \left( x+\frac{1}{2}%
\right) -\frac{1517}{44\,640\left( x+1/2\right) ^{2}} \\
&&+\frac{343}{44\,640\left( x^{2}+x+111/196\right) }-\frac{343}{22\,320}%
\frac{\left( x+1/2\right) ^{2}}{\left( x^{2}+x+111/196\right) ^{2}},
\end{eqnarray*}%
\begin{eqnarray*}
f_{5}^{\prime \prime }(x) &=&\psi ^{\prime }\left( x+1\right) -\frac{1}{x+1/2%
}+\frac{1517}{22\,320\left( x+1/2\right) ^{3}} \\
&&-\frac{343}{7440}\frac{x+1/2}{\left( x^{2}+x+111/196\right) ^{2}}+\frac{343%
}{5580}\frac{\left( x+1/2\right) ^{3}}{\left( x^{2}+x+111/196\right) ^{3}}.
\end{eqnarray*}%
Denote by $x+1/2=t$ and make use of recursive relation (\ref{psi-rel.}) yield%
\begin{eqnarray*}
&&f_{5}^{\prime \prime }(t+\frac{1}{2})-f_{5}^{\prime \prime }(t-\frac{1}{2})
\\
&=&-\tfrac{1}{\left( t+1/2\right) ^{2}}-\tfrac{1}{\left( t+1\right) }+\tfrac{%
1517}{22\,320\left( t+1\right) ^{3}}-\tfrac{343}{7440}\tfrac{t+1}{\left(
\left( t+1\right) ^{2}+31/98\right) ^{2}}+\tfrac{343}{5580}\tfrac{\left(
t+1\right) ^{3}}{\left( \left( t+1\right) ^{2}+31/98\right) ^{3}} \\
&&-\left( -\tfrac{1}{t}+\tfrac{1517}{22\,320t^{3}}-\tfrac{343}{7440}\tfrac{t%
}{\left( t^{2}+31/98\right) ^{2}}+\tfrac{343}{5580}\tfrac{t^{3}}{\left(
t^{2}+31/98\right) ^{3}}\right) \\
&=&-\frac{f_{51}\left( t\right) }{80\left( t+1/2\right) ^{2}t^{3}\left(
t+1\right) ^{3}\left( t^{2}+2t+129/98\right) ^{3}\left( t^{2}+31/98\right)
^{3}},
\end{eqnarray*}%
where 
\begin{eqnarray*}
f_{51}\left( t\right) &=&\tfrac{10\,981}{784}t^{10}+\tfrac{54\,905}{784}%
t^{9}+\tfrac{21\,028\,039}{134\,456}t^{8}+\tfrac{27\,614\,911}{134\,456}%
t^{7}+\tfrac{294\,820\,517}{1647\,086}t^{6}+\tfrac{739\,744\,471}{6588\,344}%
t^{5}+ \\
&&\tfrac{138\,266\,105\,451}{2582\,630\,848}t^{4}+\tfrac{25\,165\,604\,049}{%
1291\,315\,424}t^{3}+\tfrac{2726\,271\,884\,261}{506\,195\,646\,208}t^{2}+%
\tfrac{574\,150\,150\,569}{506\,195\,646\,208}t+\tfrac{347\,724\,739\,077}{%
3543\,369\,523\,456} \\
&>&0\text{ for }t=x+1/2>0\text{.}
\end{eqnarray*}%
This implies that $f_{5}^{\prime \prime }(t+\frac{1}{2})-f_{5}^{\prime
\prime }(t-\frac{1}{2})<0$, that is, $f_{5}^{\prime \prime
}(x+1)-f_{5}^{\prime \prime }(x)<0$, and so%
\begin{equation*}
f_{5}^{\prime \prime }(x)>f_{5}^{\prime \prime }(x+1)>f_{5}^{\prime \prime
}(x+2)>...>f_{5}^{\prime \prime }(\infty )=0.
\end{equation*}%
It reveals that shows $f_{5}$ is convex on $\left( -1/2,\infty \right) $,
and therefore, $f_{5}^{\prime }(x)<\lim_{x\rightarrow \infty }f_{5}^{\prime
}(x)=0$, which proves the desired statement.
\end{proof}

Employing the decreasing property of $f_{5}$ on $\left( -1/2,\infty \right) $%
, we obtain

\begin{corollary}
For $x>0$, the double inequality 
\begin{eqnarray*}
&&\sqrt{2\pi }\left( \tfrac{x+1/2}{e}\right) ^{x+1/2}\exp \left( -\tfrac{1517%
}{44\,640}\tfrac{1}{x+1/2}-\tfrac{343}{44\,640}\tfrac{x+1/2}{x^{2}+x+111/196}%
\right) \\
&<&\Gamma (x+1)<e^{2987/39960}\sqrt{2e}\left( \tfrac{x+1/2}{e}\right)
^{x+1/2}\exp \left( -\tfrac{1517}{44\,640}\tfrac{1}{x+1/2}-\tfrac{343}{%
44\,640}\tfrac{x+1/2}{x^{2}+x+111/196}\right)
\end{eqnarray*}%
holds, where $\sqrt{2\pi }\approx 2.5066$ and $e^{2987/39960}\sqrt{2e}%
\approx 2.5126$ are the best constants.

For $n\in \mathbb{N}$, it holds that%
\begin{eqnarray*}
&&\sqrt{2\pi }\left( \tfrac{n+1/2}{e}\right) ^{n+1/2}\exp \left( -\tfrac{1517%
}{44\,640}\tfrac{1}{n+1/2}-\tfrac{343}{44\,640}\tfrac{n+1/2}{n^{2}+n+111/196}%
\right) \\
&<&n!<\frac{2\sqrt{6}}{9}\exp \left( \tfrac{829\,607}{543\,240}\right)
\left( \tfrac{n+1/2}{e}\right) ^{n+1/2}\exp \left( -\tfrac{1517}{44\,640}%
\tfrac{1}{n+1/2}-\tfrac{343}{44\,640}\tfrac{n+1/2}{n^{2}+n+111/196}\right)
\end{eqnarray*}%
with the best constants $\sqrt{2\pi }\approx 2.5066$ and $2\sqrt{6}\exp
\left( \tfrac{829\,607}{543\,240}\right) /9\approx 2.5067$.
\end{corollary}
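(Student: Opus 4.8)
The plan is to read the corollary off from the preceding Proposition, just as the other corollaries of this section are obtained. First I would turn the definition of $f_{5}$ into an identity for $\Gamma(x+1)$. Using (\ref{N4/3}) to expand $N_{4/3}(x,x+1)$,
\[
\left(x+\tfrac12\right)\ln\left(x+\tfrac12\right)-N_{4/3}(x,x+1)=\left(x+\tfrac12\right)\ln\left(x+\tfrac12\right)-\left(x+\tfrac12\right)-\frac{1517}{44\,640}\cdot\frac{1}{x+1/2}-\frac{343}{44\,640}\cdot\frac{x+1/2}{x^{2}+x+111/196},
\]
so the relation defining $f_{5}$ becomes
\[
\Gamma(x+1)=\sqrt{2\pi}\left(\frac{x+1/2}{e}\right)^{x+1/2}\exp\left(-\frac{1517}{44\,640}\cdot\frac{1}{x+1/2}-\frac{343}{44\,640}\cdot\frac{x+1/2}{x^{2}+x+111/196}\right)e^{f_{5}(x)}.
\]

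Next I would invoke the Proposition: $f_{5}$ is decreasing on $(-1/2,\infty)$; moreover $\lim_{x\to\infty}f_{5}(x)=0$, which is precisely the asymptotic formula of Example~\ref{E-N4,3} (a special case of Theorem~\ref{MT-p2><p3}). Hence, for $x>0$, $0=f_{5}(\infty)<f_{5}(x)<f_{5}(0)$, so $1<e^{f_{5}(x)}<e^{f_{5}(0)}$; inserting this into the displayed identity yields the first double inequality, with lower constant $\sqrt{2\pi}$ and upper constant $\sqrt{2\pi}\,e^{f_{5}(0)}$. For the factorial version I would restrict to $n\in\mathbb{N}$, where the same monotonicity gives $f_{5}(\infty)<f_{5}(n)\le f_{5}(1)$ (equality at $n=1$), so the upper constant is $\sqrt{2\pi}\,e^{f_{5}(1)}$ while the lower one is again $\sqrt{2\pi}$.

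It then remains only to exhibit $e^{f_{5}(0)}$ and $e^{f_{5}(1)}$ in closed form. Since $\Gamma(1)=1$, one has $f_{5}(0)=-\tfrac12\ln2\pi-\tfrac12\ln\tfrac12+N_{4/3}(0,1)$, and the arithmetic
\[
N_{4/3}(0,1)=\frac12+\frac{1517}{44\,640}\cdot2+\frac{343}{44\,640}\cdot\frac{98}{111}=\frac{22\,967}{39\,960}
\]
gives $\sqrt{2\pi}\,e^{f_{5}(0)}=\sqrt{2}\,e^{22\,967/39\,960}=\sqrt{2e}\,e^{2987/39\,960}$. Likewise $f_{5}(1)=-\tfrac12\ln2\pi-\tfrac32\ln\tfrac32+N_{4/3}(1,2)$, and
\[
N_{4/3}(1,2)=\frac32+\frac{1517}{44\,640}\cdot\frac23+\frac{343}{44\,640}\cdot\frac{294}{503}=\frac{829\,607}{543\,240}
\]
gives $\sqrt{2\pi}\,e^{f_{5}(1)}=\tfrac{2\sqrt6}{9}\,e^{829\,607/543\,240}$. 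Sharpness is then automatic: $\sqrt{2\pi}$ cannot be raised because $e^{f_{5}(x)}\to1$ as $x\to\infty$, and the upper constants cannot be lowered because $f_{5}$ is continuous and decreasing, so $\sup_{x>0}f_{5}(x)=f_{5}(0)$ and $\max_{n\in\mathbb{N}}f_{5}(n)=f_{5}(1)$.

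I expect the only real obstacle to be bookkeeping: collapsing $N_{4/3}(0,1)$ and $N_{4/3}(1,2)$ to the fractions $22\,967/39\,960$ and $829\,607/543\,240$, and rewriting $\sqrt{2\pi}\,e^{f_{5}(0)}$ and $\sqrt{2\pi}\,e^{f_{5}(1)}$ in the stated radical form, are tedious and error-prone. There is no analytic difficulty left, since the monotonicity of $f_{5}$ — the content of the preceding Proposition — carries the whole argument.
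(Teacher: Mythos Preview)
Your proposal is correct and is exactly the argument the paper has in mind: the corollary is stated immediately after the Proposition with only the sentence ``Employing the decreasing property of $f_{5}$ on $(-1/2,\infty)$, we obtain'', so your derivation via the identity $\Gamma(x+1)=\sqrt{2\pi}\,(\tfrac{x+1/2}{e})^{x+1/2}\exp(\cdots)\,e^{f_{5}(x)}$ together with $f_{5}(\infty)=0<f_{5}(x)<f_{5}(0)$ (resp.\ $\le f_{5}(1)$) is precisely what is intended, and your boundary evaluations $\sqrt{2\pi}\,e^{f_{5}(0)}=\sqrt{2e}\,e^{2987/39960}$ and $\sqrt{2\pi}\,e^{f_{5}(1)}=\tfrac{2\sqrt{6}}{9}e^{829607/543240}$ match the stated constants.
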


Lastly, we give an application example of Theorem \ref{MT-p2=p3}.

\begin{example}
let%
\begin{equation*}
M\left( a,b\right) =H_{p,q;r}^{2,1}\left( a,b\right) =\frac{%
pb^{2}+qa^{2}+(1-p-q)ab}{rb+(1-r)a}
\end{equation*}%
and $\theta =0,\sigma =1$ in Theorem \ref{MT-p2=p3}. Then by the same method
previously, we can derive two best arrays%
\begin{eqnarray*}
\left( p_{1},q_{1},r_{1}\right) &=&\left( \frac{129-59\sqrt{3}}{360},\frac{%
129+59\sqrt{3}}{360},\frac{90-29\sqrt{3}}{180}\right) , \\
\left( p_{2},q_{2},r_{2}\right) &=&\left( \frac{129+59\sqrt{3}}{360},\frac{%
129-59\sqrt{3}}{360},\frac{90+29\sqrt{3}}{180}\right) .
\end{eqnarray*}%
Then,%
\begin{eqnarray}
H_{p_{1},q_{1};r_{1}}^{2,1}\left( x,x+1\right) &=&\frac{x^{2}+\frac{180-59%
\sqrt{3}}{180}x+\frac{129-59\sqrt{3}}{360}}{x+\frac{90-29\sqrt{3}}{180}}%
:=M_{1}\left( x,x+1\right) ,  \label{M1} \\
H_{p_{2},q_{2};r_{2}}^{2,1}\left( x,x+1\right) &=&\frac{x^{2}+\frac{180+59%
\sqrt{3}}{180}x+\frac{129+59\sqrt{3}}{360}}{x+\frac{90+29\sqrt{3}}{180}}%
:=M_{2}\left( x,x+1\right)  \label{M2}
\end{eqnarray}%
It is easy to check that $M\left( a,b\right) $ are means of $a$ and $b$ for $%
\left( p,q,r\right) =\left( p_{1},q_{1},r_{1}\right) $ and $\left(
p_{2},q_{2},r_{2}\right) $. Thus, application of Theorem \ref{MT-p2=p3}
implies that both the following two asymptotic formulas%
\begin{equation*}
\ln \Gamma (x+1)\sim \frac{1}{2}\ln 2\pi +\left( x+1/2\right) \ln
M_{i}\left( x,x+1\right) -M_{i}\left( x,x+1\right) \text{, }i=1,2
\end{equation*}%
are valid as $x\rightarrow \infty $. And, we have%
\begin{eqnarray*}
\lim_{x\rightarrow \infty }\tfrac{\ln \Gamma (x+1)-\frac{1}{2}\ln 2\pi
-\left( x+1/2\right) \ln M_{1}\left( x,x+1\right) +M_{1}\left( x,x+1\right) 
}{x^{-4}} &=&-\tfrac{1481\sqrt{3}}{2332\,800}, \\
\lim_{x\rightarrow \infty }\tfrac{\ln \Gamma (x+1)-\frac{1}{2}\ln 2\pi
-\left( x+1/2\right) \ln M_{2}\left( x,x+1\right) +M_{2}\left( x,x+1\right) 
}{x^{-4}} &=&\tfrac{1481\sqrt{3}}{2332\,800}.
\end{eqnarray*}
\end{example}

The above two asymptotic formulas also have well properties.

\begin{proposition}
Let $f_{6},f_{7}$ be defined on $\left( 0,\infty \right) $ by%
\begin{eqnarray*}
f_{6}(x) &=&\ln \Gamma (x+1)-\frac{1}{2}\ln 2\pi -\left( x+1/2\right) \ln
M_{1}\left( x,x+1\right) +M_{1}\left( x,x+1\right) , \\
f_{7}(x) &=&\ln \Gamma (x+1)-\frac{1}{2}\ln 2\pi -\left( x+1/2\right) \ln
M_{2}\left( x,x+1\right) +M_{2}\left( x,x+1\right) ,
\end{eqnarray*}%
where $M_{1}$ and $M_{2}$ are defined\ by (\ref{M1}) and (\ref{M2}),
respectively. Then $f_{6}\ $is increasing and concave on $\left( 0,\infty
\right) $, while $f_{7}$ is decreasing and convex on $\left( 0,\infty
\right) $.
\end{proposition}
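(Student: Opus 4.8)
The plan is to follow the scheme already used above for $f_{3},f_{4},f_{5}$: reduce the sign of the second derivative to the sign of a single explicit polynomial by means of the functional equation (\ref{psi-rel.}), and then recover the monotonicity of $f_{i}^{\prime }$ and of $f_{i}$ from their vanishing at infinity. Write $M_{1}\left( x,x+1\right) =\left( x^{2}+a_{1}x+b_{1}\right) /\left( x+c_{1}\right) $ with $a_{1}=\left( 180-59\sqrt{3}\right) /180$, $b_{1}=\left( 129-59\sqrt{3}\right) /360$, $c_{1}=\left( 90-29\sqrt{3}\right) /180$, and $M_{2}\left( x,x+1\right) =\left( x^{2}+a_{2}x+b_{2}\right) /\left( x+c_{2}\right) $ with $a_{2},b_{2},c_{2}$ obtained by $\sqrt{3}\mapsto -\sqrt{3}$. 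The point, used repeatedly, is that all six constants $a_{i},b_{i},c_{i}$ are positive, that each $x^{2}+a_{i}x+b_{i}$ has negative discriminant and is therefore a positive definite quadratic, and that $M_{i}\left( x,x+1\right) =x+O\left( 1\right) $ with $\frac{d}{dx}M_{i}\left( x,x+1\right) \rightarrow 1$ as $x\rightarrow \infty $.

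First I would differentiate twice. Since $f_{6}\left( x\right) =\ln \Gamma \left( x+1\right) -\tfrac{1}{2}\ln 2\pi -\left( x+\tfrac{1}{2}\right) \ln M_{1}\left( x,x+1\right) +M_{1}\left( x,x+1\right) $, differentiation yields $f_{6}^{\prime \prime }\left( x\right) =\psi ^{\prime }\left( x+1\right) +R_{1}\left( x\right) $, where $R_{1}$ is an explicit rational function whose denominator is a product of powers of $x+c_{1}$ and of the positive definite quadratic $x^{2}+a_{1}x+b_{1}$, hence finite and nonzero on $\left( 0,\infty \right) $; similarly $f_{7}^{\prime \prime }\left( x\right) =\psi ^{\prime }\left( x+1\right) +R_{2}\left( x\right) $. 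The limits $f_{6}\left( \infty \right) =f_{7}\left( \infty \right) =0$ are exactly the content of Theorem \ref{MT-p2=p3}, applied with the mean $M=H_{p_{i},q_{i};r_{i}}^{2,1}$ (which was checked to be a genuine mean) and $r\equiv 0$; a direct computation, equivalently differentiation of the corresponding asymptotic expansion in powers of $1/x$ whose leading term is recorded above, gives $f_{i}^{\prime }\left( \infty \right) =f_{i}^{\prime \prime }\left( \infty \right) =0$.

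Next I would put $t=x+\tfrac{1}{2}$ and form $\Delta _{i}\left( t\right) :=f_{i}^{\prime \prime }\left( t+\tfrac{1}{2}\right) -f_{i}^{\prime \prime }\left( t-\tfrac{1}{2}\right) $. By (\ref{psi-rel.}) with $n=1$ the polygamma part collapses to $-1/\left( t+\tfrac{1}{2}\right) ^{2}$, and after clearing denominators one is left with $\Delta _{i}\left( t\right) =\pm f_{i1}\left( t\right) /P_{i}\left( t\right) $, where $P_{i}\left( t\right) $ is a product of manifestly positive factors ($t$, $t+1$, $\left( t+\tfrac{1}{2}\right) ^{2}$ and translates of positive definite quadratics), and $f_{i1}$ is an explicit polynomial with coefficients in $\mathbb{Q}\left[ \sqrt{3}\right] $. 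The \emph{crux} is to prove $f_{61}\left( t\right) >0$ and $f_{71}\left( t\right) >0$ on the relevant range $t=x+\tfrac{1}{2}>0$: since $a_{i},b_{i},c_{i}>0$ I expect the expanded coefficients of $f_{i1}$ to come out positive, in which case this is immediate; should a sign-indefinite coefficient appear, I would instead write $f_{i1}\left( t\right) =A_{i}\left( t\right) +\sqrt{3}\,B_{i}\left( t\right) $ with $A_{i},B_{i}\in \mathbb{Q}\left[ t\right] $ and verify $A_{i}\left( t\right) >0$ together with $A_{i}^{2}\left( t\right) -3B_{i}^{2}\left( t\right) >0$ by grouping into blocks with positive coefficients. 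Granting this, $\Delta _{6}\left( t\right) >0$ forces $f_{6}^{\prime \prime }\left( x\right) <f_{6}^{\prime \prime }\left( x+1\right) <\cdots <f_{6}^{\prime \prime }\left( \infty \right) =0$, so $f_{6}$ is concave, and then $f_{6}^{\prime }\left( x\right) >f_{6}^{\prime }\left( \infty \right) =0$, so $f_{6}$ is increasing; the overall sign in $\Delta _{7}$ is the opposite one (as in the computation carried out for $f_{5}$), so $f_{7}^{\prime \prime }\left( x\right) >f_{7}^{\prime \prime }\left( x+1\right) >\cdots >0$, whence $f_{7}$ is convex and $f_{7}^{\prime }\left( x\right) <f_{7}^{\prime }\left( \infty \right) =0$, so $f_{7}$ is decreasing. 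The only genuine obstacle is this explicit polynomial positivity check with $\sqrt{3}$-coefficients; everything else is routine differentiation and evaluation of limits.
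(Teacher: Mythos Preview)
Your proposal is correct and follows essentially the same scheme as the paper's proof: differentiate twice, apply the recursion (\ref{psi-rel.}) to the difference $f_{i}^{\prime\prime}(x+1)-f_{i}^{\prime\prime}(x)$, reduce to the positivity of an explicit polynomial with coefficients in $\mathbb{Q}[\sqrt{3}]$, and then cascade from $f_{i}^{\prime\prime}(\infty)=0$ to concavity/convexity and from $f_{i}^{\prime}(\infty)=0$ to monotonicity. The only cosmetic difference is that the paper carries out the computation for $f_{6}$ in the variable $x$ directly (not in $t=x+\tfrac12$ as for $f_{3},f_{4},f_{5}$), and verifies that every coefficient of the resulting numerator polynomial $f_{61}(x)$---each of the form $\alpha-\beta\sqrt{3}$---is in fact positive, so your fallback $A_{i}^{2}-3B_{i}^{2}$ argument is not needed.
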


\begin{proof}
Differentiation gives%
\begin{eqnarray*}
f_{6}^{\prime }\left( x\right) &=&\psi (x+1)-\ln \frac{x^{2}+\frac{180-59%
\sqrt{3}}{180}x+\frac{129-59\sqrt{3}}{360}}{x+\frac{90-29\sqrt{3}}{180}}-%
\frac{\left( x+\frac{1}{2}\right) \left( 2x+\frac{180-59\sqrt{3}}{180}%
\right) }{x^{2}+\frac{180-59\sqrt{3}}{180}x+\frac{129-59\sqrt{3}}{360}} \\
&&+\frac{x+\frac{1}{2}}{x+\frac{90-29\sqrt{3}}{180}}+\frac{2x+\frac{180-59%
\sqrt{3}}{180}}{x+\frac{90-29\sqrt{3}}{180}}-\frac{x^{2}+\frac{180-59\sqrt{3}%
}{180}x+\frac{129-59\sqrt{3}}{360}}{\left( x+\frac{90-29\sqrt{3}}{180}%
\right) ^{2}},
\end{eqnarray*}%
\begin{eqnarray*}
f_{6}^{\prime \prime }\left( x\right) &=&\psi ^{\prime }(x+1)-\frac{2x+\frac{%
180-59\sqrt{3}}{180}}{x^{2}+\frac{180-59\sqrt{3}}{180}x+\frac{129-59\sqrt{3}%
}{360}}+\frac{1}{x+\frac{90-29\sqrt{3}}{180}} \\
&&+\frac{59\sqrt{3}}{180}\frac{x^{2}+\frac{59-26\sqrt{3}}{59}x+\frac{43}{120}%
-\frac{13\sqrt{3}}{59}}{\left( x^{2}+\frac{180-59\sqrt{3}}{180}x+\frac{129-59%
\sqrt{3}}{360}\right) ^{2}} \\
&&-\frac{7\sqrt{3}}{45}\frac{1}{\left( x+\frac{90-29\sqrt{3}}{180}\right)
^{2}}-\frac{\sqrt{3}}{180}\frac{x-\frac{629\sqrt{3}-90}{180}}{\left( x+\frac{%
90-29\sqrt{3}}{180}\right) ^{3}}.
\end{eqnarray*}%
Employing the recursive relation (\ref{psi-rel.}) and factoring reveal that%
\begin{equation*}
f_{6}^{\prime \prime }\left( x+1\right) -f_{6}^{\prime \prime }\left(
x\right) =\frac{1481\sqrt{3}}{19\,440}\frac{f_{61}\left( x\right) }{%
f_{62}\left( x\right) },
\end{equation*}%
where%
\begin{eqnarray*}
f_{61}\left( x\right) &=&x^{9}+\left( 9-\tfrac{337\,153}{266\,580}\sqrt{3}%
\right) x^{8}+\left( \tfrac{991\,207\,423}{26\,658\,000}-\tfrac{674\,306}{%
66\,645}\sqrt{3}\right) x^{7} \\
&&+\left( \tfrac{2459\,907\,961}{26\,658\,000}-\tfrac{169\,081\,132\,727}{%
4798\,440\,000}\sqrt{3}\right) x^{6}+\left( \tfrac{4335\,292\,090\,469}{%
28\,790\,640\,000}-\tfrac{55\,797\,724\,727}{799\,740\,000}\sqrt{3}\right)
x^{5} \\
&&+\left( \tfrac{956\,621\,902\,709}{5758\,128\,000}-\tfrac{%
148\,442\,768\,304\,491}{1727\,438\,400\,000}\sqrt{3}\right) x^{4} \\
&&+\left( \tfrac{229\,288\,958\,388\,788\,929}{1865\,633\,472\,000\,000}-%
\tfrac{29\,135\,013\,047\,291}{431\,859\,600\,000}\sqrt{3}\right) x^{3} \\
&&+\left( \tfrac{36\,305\,075\,316\,164\,929}{621\,877\,824\,000\,000}-%
\tfrac{55\,416\,459\,045\,055\,111\,861}{1679\,070\,124\,800\,000\,000}\sqrt{%
3}\right) x^{2} \\
&&+\left( \tfrac{179\,958\,708\,278\,174\,628\,611}{11\,193\,800\,832\,000%
\,000\,000}-\tfrac{7731\,435\,289\,282\,423\,861}{839\,535\,062\,400\,000%
\,000}\sqrt{3}\right) x \\
&&+\left( \tfrac{21\,826\,051\,463\,638\,680\,611}{11\,193\,800\,832\,000%
\,000\,000}-\tfrac{5586\,677\,417\,732\,710\,687}{4975\,022\,592\,000\,000%
\,000}\sqrt{3}\right) ,
\end{eqnarray*}%
\begin{eqnarray*}
f_{62}\left( x\right) &=&\left( x+1\right) ^{2}\left( x^{2}+\tfrac{180-59%
\sqrt{3}}{180}x+\tfrac{129-59\sqrt{3}}{360}\right) ^{2}\left( x^{2}+\tfrac{%
540-59\sqrt{3}}{180}x+\tfrac{283-59\sqrt{3}}{120}\right) ^{2} \\
&&\times \left( x+\tfrac{270-29\sqrt{3}}{180}\right) ^{3}\left( x+\tfrac{%
90-29\sqrt{3}}{180}\right) ^{3}.
\end{eqnarray*}%
By direct verifications we see that all coefficients of $f_{61}$ and $f_{62}$
are positive, so $f_{61}\left( x\right) $, $f_{62}\left( x\right) >0$ for $%
x>0$. Therefore, we get $f_{6}^{\prime \prime }\left( x+1\right)
-f_{6}^{\prime \prime }\left( x\right) >0$, which yields%
\begin{equation*}
f_{6}^{\prime \prime }(x)<f_{6}^{\prime \prime }(x+1)<f_{6}^{\prime \prime
}(x+2)<...<f_{6}^{\prime \prime }(\infty )=0.
\end{equation*}%
It shows that $f_{6}$ is concave on $\left( 0,\infty \right) $, and
therefore, $f_{6}^{\prime }(x)>\lim_{x\rightarrow \infty }f_{6}^{\prime
}(x)=0$, which proves the monotonicity and concavity of $f_{6}$.

In the same way, we can prove the monotonicity and convexity of $f_{7}$ on $%
\left( 0,\infty \right) $, whose details are omitted.
\end{proof}

As direct consequences of previous proposition, we have

\begin{corollary}
For $x>0$, the double inequality 
\begin{eqnarray*}
&&\delta _{0}\sqrt{2\pi }\left( \tfrac{x^{2}+\frac{180-59\sqrt{3}}{180}x+%
\frac{129-59\sqrt{3}}{360}}{x+\frac{90-29\sqrt{3}}{180}}\right) ^{x+1/2}\exp
\left( -\tfrac{x^{2}+\frac{180-59\sqrt{3}}{180}x+\frac{129-59\sqrt{3}}{360}}{%
x+\frac{90-29\sqrt{3}}{180}}\right) \\
&<&\Gamma (x+1)<\sqrt{2\pi }\left( \tfrac{x^{2}+\frac{180-59\sqrt{3}}{180}x+%
\frac{129-59\sqrt{3}}{360}}{x+\frac{90-29\sqrt{3}}{180}}\right) ^{x+1/2}\exp
\left( -\tfrac{x^{2}+\frac{180-59\sqrt{3}}{180}x+\frac{129-59\sqrt{3}}{360}}{%
x+\frac{90-29\sqrt{3}}{180}}\right)
\end{eqnarray*}%
holds, where $\delta _{0}=\exp f_{6}\left( 0\right) \approx 0.96259$ and $1$
are the best constants.

For $n\in \mathbb{N}$, it holds that%
\begin{eqnarray*}
&&\delta _{2}\sqrt{2\pi }\left( \tfrac{n^{2}+\frac{180-59\sqrt{3}}{180}n+%
\frac{129-59\sqrt{3}}{360}}{n+\frac{90-29\sqrt{3}}{180}}\right) ^{n+1/2}\exp
\left( -\tfrac{n^{2}+\frac{180-59\sqrt{3}}{180}n+\frac{129-59\sqrt{3}}{360}}{%
n+\frac{90-29\sqrt{3}}{180}}\right) \\
&<&n!<\sqrt{2\pi }\left( \tfrac{n^{2}+\frac{180-59\sqrt{3}}{180}n+\frac{%
129-59\sqrt{3}}{360}}{n+\frac{90-29\sqrt{3}}{180}}\right) ^{n+1/2}\exp
\left( -\tfrac{n^{2}+\frac{180-59\sqrt{3}}{180}n+\frac{129-59\sqrt{3}}{360}}{%
n+\frac{90-29\sqrt{3}}{180}}\right)
\end{eqnarray*}%
with the best constants $\delta _{1}=\exp f_{6}\left( 1\right) \approx
0.99965$ and $1$.
\end{corollary}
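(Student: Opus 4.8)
The plan is to deduce both displays from the monotonicity of $f_{6}$ proved in the preceding proposition, exactly in the way the earlier corollaries of Section~3 were obtained from their propositions. First I would observe that, although $f_{6}$ is written on $\left(0,\infty\right)$, the defining expression extends continuously to $x=0$, since $\Gamma(1)=1$ and $M_{1}(0,1)=\tfrac{129-59\sqrt{3}}{2\left(90-29\sqrt{3}\right)}$ is a finite positive number; denote this boundary value by $f_{6}(0)$. Because $f_{6}$ is strictly increasing on $\left(0,\infty\right)$ and because the asymptotic formula $\Gamma(x+1)\thicksim\sqrt{2\pi}\,M_{1}(x,x+1)^{x+1/2}e^{-M_{1}(x,x+1)}$ of the preceding example holds by Theorem~\ref{MT-p2=p3}, we have $\lim_{x\to\infty}f_{6}(x)=0$; hence $f_{6}(0)<f_{6}(x)<0$ for every $x>0$, i.e.\ $e^{f_{6}(0)}<e^{f_{6}(x)}<1$.

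The second step is to exponentiate the definition of $f_{6}$. By construction $e^{f_{6}(x)}=\Gamma(x+1)\big/\bigl(\sqrt{2\pi}\,M_{1}(x,x+1)^{x+1/2}e^{-M_{1}(x,x+1)}\bigr)$, so multiplying the bound of the previous paragraph through by $\sqrt{2\pi}\,M_{1}(x,x+1)^{x+1/2}e^{-M_{1}(x,x+1)}$ and inserting the explicit rational form of $M_{1}(x,x+1)$ from~(\ref{M1}) gives precisely the claimed continuous double inequality, with lower constant $\delta_{0}=e^{f_{6}(0)}$ and upper constant $1$. Both constants are best: as $x\to0^{+}$ the quotient $\Gamma(x+1)\big/\bigl(\sqrt{2\pi}\,M_{1}(x,x+1)^{x+1/2}e^{-M_{1}(x,x+1)}\bigr)$ tends to $\delta_{0}$, so the left constant cannot be enlarged, and as $x\to\infty$ it tends to $1$, so the right constant cannot be reduced. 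A short evaluation using $\Gamma(1)=1$ and the value of $M_{1}(0,1)$ above confirms $\delta_{0}=e^{f_{6}(0)}\approx0.96259$.

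For the factorial version I would restrict to integers $n\ge1$. Monotonicity of $f_{6}$ then gives $f_{6}(1)\le f_{6}(n)<0$, i.e.\ $e^{f_{6}(1)}\le e^{f_{6}(n)}<1$, and substituting $\Gamma(n+1)=n!$ into the identity above yields the stated inequality for $n!$ with best constants $\delta_{1}=e^{f_{6}(1)}$ on the left (attained, with equality, at $n=1$) and $1$ on the right (the limit as $n\to\infty$); one checks $\delta_{1}=e^{f_{6}(1)}\approx0.99965$ from $\Gamma(2)=1$ and $M_{1}(1,2)=\bigl(1+\tfrac{180-59\sqrt{3}}{180}+\tfrac{129-59\sqrt{3}}{360}\bigr)\big/\bigl(1+\tfrac{90-29\sqrt{3}}{180}\bigr)$ (so the symbol $\delta_{2}$ in the statement should read $\delta_{1}$). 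I do not anticipate any real obstacle here: all the substance is in the proposition, and the only mildly delicate points are checking the continuous extension of $f_{6}$ to $x=0$ and carrying out the $\sqrt{3}$-arithmetic needed to match the quoted decimals.
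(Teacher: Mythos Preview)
Your proposal is correct and follows exactly the pattern the paper intends: the corollary is stated in the paper without proof, merely as a ``direct consequence of the previous proposition,'' and your argument---using the monotonicity of $f_{6}$ together with $f_{6}(\infty)=0$ to sandwich $e^{f_{6}(x)}$ between $e^{f_{6}(0)}$ and $1$, then exponentiating the definition of $f_{6}$---is precisely the mechanism used for the earlier corollaries in Section~3. Your observation that the symbol $\delta_{2}$ in the factorial display is a typo for $\delta_{1}$ is also correct.
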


\begin{corollary}
For $x>0$, the double inequality 
\begin{eqnarray*}
&&\sqrt{2\pi }\left( \tfrac{x^{2}+\frac{180+59\sqrt{3}}{180}x+\frac{129+59%
\sqrt{3}}{360}}{x+\frac{90+29\sqrt{3}}{180}}\right) ^{x+1/2}\exp \left( -%
\tfrac{x^{2}+\frac{180+59\sqrt{3}}{180}x+\frac{129+59\sqrt{3}}{360}}{x+\frac{%
90+29\sqrt{3}}{180}}\right) \\
&<&\Gamma (x+1)<\tau _{0}\sqrt{2\pi }\left( \tfrac{x^{2}+\frac{180+59\sqrt{3}%
}{180}x+\frac{129+59\sqrt{3}}{360}}{x+\frac{90+29\sqrt{3}}{180}}\right)
^{x+1/2}\exp \left( -\tfrac{x^{2}+\frac{180+59\sqrt{3}}{180}x+\frac{129+59%
\sqrt{3}}{360}}{x+\frac{90+29\sqrt{3}}{180}}\right)
\end{eqnarray*}%
holds, where $\tau _{0}=\exp f_{7}\left( 0\right) \approx 1.0020$ and $1$
are the best constants.

For $n\in \mathbb{N}$, it holds that%
\begin{eqnarray*}
&&\sqrt{2\pi }\left( \tfrac{n^{2}+\frac{180+59\sqrt{3}}{180}n+\frac{129+59%
\sqrt{3}}{360}}{n+\frac{90+29\sqrt{3}}{180}}\right) ^{n+1/2}\exp \left( -%
\tfrac{n^{2}+\frac{180+59\sqrt{3}}{180}n+\frac{129+59\sqrt{3}}{360}}{n+\frac{%
90+29\sqrt{3}}{180}}\right) \\
&<&n!<\tau _{1}\sqrt{2\pi }\left( \tfrac{n^{2}+\frac{180+59\sqrt{3}}{180}n+%
\frac{129+59\sqrt{3}}{360}}{n+\frac{90+29\sqrt{3}}{180}}\right) ^{n+1/2}\exp
\left( -\tfrac{n^{2}+\frac{180+59\sqrt{3}}{180}n+\frac{129+59\sqrt{3}}{360}}{%
n+\frac{90+29\sqrt{3}}{180}}\right)
\end{eqnarray*}%
with the best constants $\delta _{1}=\exp f_{7}\left( 1\right) \approx
1.0001 $ and $1$.
\end{corollary}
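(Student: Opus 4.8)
The plan is to derive both double inequalities as immediate consequences of the preceding proposition, so that essentially no new analytic work is required. First I would rewrite the definition of $f_{7}$ in multiplicative form: exponentiating
\[
f_{7}(x)=\ln \Gamma (x+1)-\tfrac{1}{2}\ln 2\pi -\left( x+\tfrac{1}{2}\right) \ln M_{2}\left( x,x+1\right) +M_{2}\left( x,x+1\right)
\]
gives
\[
\Gamma (x+1)=\sqrt{2\pi }\,M_{2}\left( x,x+1\right) ^{x+1/2}e^{-M_{2}\left( x,x+1\right) }e^{f_{7}(x)}.
\]
Hence the first assertion is equivalent to the bound $1<e^{f_{7}(x)}<\tau _{0}$ for all $x>0$, and the second to $1<e^{f_{7}(n)}\leq \exp f_{7}(1)$ for all $n\in \mathbb{N}$; that is, everything reduces to two-sided estimates for $f_{7}$ itself.

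Next I would invoke that $f_{7}$ is decreasing on $\left( 0,\infty \right) $, which is exactly the content of the previous proposition, together with its two boundary values. At infinity, $\lim_{x\rightarrow \infty }f_{7}(x)=0$ is nothing but the limit relation behind the asymptotic formula of the last example, namely Theorem \ref{MT-p2=p3} applied with $\theta =0$, $\sigma =1$ and $M=M_{2}$. At $0$, substituting $x=0$ and using $\Gamma (1)=1$ together with $M_{2}(0,1)=\frac{129+59\sqrt{3}}{360}\cdot \frac{180}{90+29\sqrt{3}}$ evaluates $f_{7}\!\left( 0^{+}\right) =\ln \tau _{0}$. Since $f_{7}$ is strictly decreasing and extends continuously to $x=0$, this forces $0<f_{7}(x)<\ln \tau _{0}$ for every $x>0$; exponentiating and multiplying by $\sqrt{2\pi }\,M_{2}(x,x+1)^{x+1/2}e^{-M_{2}(x,x+1)}$ delivers the first double inequality. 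The constants $1$ and $\tau _{0}$ are not attained on $\left( 0,\infty \right) $, but they are the respective limits of $\dfrac{\Gamma (x+1)}{\sqrt{2\pi }\,M_{2}(x,x+1)^{x+1/2}e^{-M_{2}(x,x+1)}}$ as $x\rightarrow \infty $ and as $x\rightarrow 0^{+}$, hence they are best possible.

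For the factorial version I would restrict the variable to the integers $n\geq 1$. Since $f_{7}$ is still decreasing there, $0=\lim_{x\rightarrow \infty }f_{7}(x)<f_{7}(n)\leq f_{7}(1)$, and $f_{7}(1)$ is computed explicitly by setting $x=1$, using $\Gamma (2)=1$ and the value $M_{2}(1,2)$, which produces the stated constant $\exp f_{7}(1)\approx 1.0001$. Replacing $\Gamma (n+1)$ by $n!$ and exponentiating then yields the second inequality, the upper constant being attained at $n=1$ and the lower one being optimal in the limit $n\rightarrow \infty $.

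I do not expect a genuine obstacle: the whole difficulty lies in the monotonicity of $f_{7}$, which has already been proved, and what remains is only the bookkeeping of the two boundary evaluations $f_{7}(0)$ and $f_{7}(1)$ — elementary arithmetic involving the surds $59\sqrt{3}$ and $29\sqrt{3}$ — together with the observation that the limit at infinity is precisely the asymptotic formula already established. Note that the convexity of $f_{7}$ supplied by the proposition plays no role in this corollary; only the fact that $f_{7}$ is decreasing is used.
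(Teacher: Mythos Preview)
Your proposal is correct and matches the paper's approach exactly: the paper presents this corollary without proof, simply as a ``direct consequence of the previous proposition,'' and the argument you outline---using the strict decrease of $f_{7}$ on $(0,\infty)$ together with the boundary values $f_{7}(0^{+})=\ln\tau_{0}$, $f_{7}(1)$, and $f_{7}(\infty)=0$---is precisely the intended reasoning, mirroring how the earlier analogous corollaries (for $f_{1}$ through $f_{5}$) are deduced in the paper. Your remark that convexity is not needed here is also accurate.
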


\section{Open problems}

Inspired by Examples \ref{E-M3,2}--\ref{E-N4,3}, we propose the following
problems.

\begin{problem}
Let $S_{p_{k};q_{k}}^{n,n-1}\left( a,b\right) $ be defined by (\ref{S^n,n-1}%
). Finding $p_{k}$ and $q_{k}$ such that the asymptotic formula for the
gamma function%
\begin{equation*}
\ln \Gamma (x+1)\sim \frac{1}{2}\ln 2\pi +\left( x+\frac{1}{2}\right) \ln
S_{p_{k};q_{k}}^{n,n-1}\left( x,x+1\right) -\left( x+\frac{1}{2}\right)
:=F_{1}\left( x\right)
\end{equation*}%
holds as $x\rightarrow \infty $ with%
\begin{equation*}
\lim_{x\rightarrow \infty }\frac{\ln \Gamma (x+1)-F_{1}\left( x\right) }{%
x^{-2n+1}}=c_{1}\neq 0,\pm \infty .
\end{equation*}
\end{problem}

\begin{problem}
Let $S_{p_{k};q_{k}}^{n,n-1}\left( a,b\right) $ be defined by (\ref{S^n,n-1}%
). Finding $p_{k}$ and $q_{k}$ such that the asymptotic formula for the
gamma function%
\begin{equation*}
\ln \Gamma (x+1)\sim \frac{1}{2}\ln 2\pi +\left( x+\frac{1}{2}\right) \ln
\left( x+\frac{1}{2}\right) -S_{p_{k};q_{k}}^{n,n-1}\left( x,x+1\right)
:=F_{2}\left( x\right)
\end{equation*}%
holds as $x\rightarrow \infty $ with%
\begin{equation*}
\lim_{x\rightarrow \infty }\frac{\ln \Gamma (x+1)-F_{2}\left( x\right) }{%
x^{-2n+1}}=c_{2}\neq 0,\pm \infty .
\end{equation*}
\end{problem}

\begin{problem}
Let $H_{p_{k};q_{k}}^{n,n-1}\left( a,b\right) $ be defined by (\ref{H^n,n-1}%
). Finding $p_{k}$ and $q_{k}$ such that the asymptotic formula for the
gamma function%
\begin{equation*}
\ln \Gamma (x+1)\sim \frac{1}{2}\ln 2\pi +\left( x+\frac{1}{2}\right) \ln
H_{p_{k};q_{k}}^{n,n-1}\left( x,x+1\right) -H_{p_{k};q_{k}}^{n,n-1}\left(
x,x+1\right) :=F_{3}\left( x\right)
\end{equation*}%
holds as $x\rightarrow \infty $ with%
\begin{equation*}
\lim_{x\rightarrow \infty }\frac{\ln \Gamma (x+1)-F_{1}\left( x\right) }{%
x^{-2n}}=c_{3}\neq 0,\pm \infty .
\end{equation*}
\end{problem}

\end{document}